\tikzset{
  symbol/.style={
    draw=none,
    every to/.append style={
      edge node={node [sloped, allow upside down, auto=false]{$#1$}}}
  }
}
\DeclareMathOperator{\Gal}{Gal}
\DeclareMathOperator{\Hom}{Hom}
\DeclareMathOperator{\PG}{PG}
\DeclareMathOperator{\AG}{AG}
\DeclareMathOperator{\Aut}{Aut}
\theoremstyle{definition}
\newtheorem{definition}{Definition}[section]
\newtheorem{example}[definition]{Example}
\newtheorem{remark}[definition]{Remark}
\newtheorem*{remark*}{Remark}
\theoremstyle{plain}
\newtheorem{theorem}[definition]{Theorem}
\newtheorem{corollary}[definition]{Corollary}
\newtheorem{lemma}[definition]{Lemma}
\newtheorem{proposition}[definition]{Proposition}
\newcommand{\vF}{{ \mathbb F }}
\newcommand{\vK}{{ \mathbb K }}
\newcommand{\vP}{{ \mathbb P }}
\newcommand{\vA}{{ \mathbb A }}
\author[L. Bastioni]{Luca Bastioni}
\address{University of South Florida \\ 4202 E Fowler Ave\\
33620 Tampa, US.}
\email{lbastioni@usf.edu}
\author[G. Micheli]{Giacomo Micheli}
\address{University of South Florida\\
4202 E Fowler Ave\\
33620 Tampa, US.
}
\email{gmicheli@usf.edu}
\title{On complete $m$-arcs}
\keywords{}
\subjclass[2020]{05B25, 51E21, 11R45, 51E20, 11T06}
\begin{document}

\begin{abstract}
Let $m$ be a positive integer and $q$ be a prime power.
For large finite base fields $\vF_q$, we show that any curve can be used to produce a complete $m$-arc as long as some generic explicit geometric conditions on the curve are verified. To show the effectiveness of our theory, we derive complete $m$-arcs from hyperelliptic curves and from Artin-Schreier curves.
\end{abstract}

\maketitle

\section{Introduction}

Let $q$ be a prime power and $\vF_q$ be the finite field of order $q$. Let $\vP^2(\vF_q)=\PG(2,q)$ be the set of $\vF_q$-points $[X_0,X_1,X_2]$ of the projective plane. 
Let $\AG(2,q)$ be the set of $\vF_q$ points of the affine plane, which we will identify with the projective points of $\vP^2(\vF_q)$ with $X_2\neq 0$. 
A $(k,m)$\emph{-arc} $\mathcal A$ in $\vP^2(\vF_q)$ is a set of $k$ points such that there are no $m+1$ points that are collinear and such that there exist $m$ collinear points. For fixed $m,q$, we have that the set $\mathcal A_{m}(q)$ of all $m$-arcs in $\vP^2(\vF_q)$ is partially ordered by inclusion. A \emph{complete} $m$-arc is a maximal element in $\mathcal A_m(q)$, i.e. it is an $m$-arc that is not contained in a strictly larger $m$-arc.

In the late 50's, Segre introduced the notion of arc for $m=2$, which has been has been extensively studied in the literature (Segre was in fact interested to know how many points can complete arcs in $\vP^2(\vF_q)$ have, see \cite{Segre1959LeGD}). In this simpler case $m=2$, the theory is well developed and there are plenty fo constructions (see for example \cite{Anbar2013SMALLCC, Anbar2015SMALLCC2, Anbar2013Bicovering, Hirschfeld1986Arcs, hirschfeld1998projective, hirschfeld1998packing, Szhonyi1989Survey, szonyi1997BookApplication}). For maximum size arcs in odd characteristic, Segre himself proved that the set of points of an arc is a conic.

Concerning minimal sizes, much less is known. Using a probabilistic method, the paper \cite{Kim2003SmallCA}, shows that there exists a complete arc of size $O(\sqrt{q}\log^c q)$ in a projective plane of order $q>M$ (for some positive constants $c,M$ independent on $q$).
Another clever idea to construct arcs (due to Segre and Lombardo-Radice \cite{Lombardo1956Dispari, Segre1962Sigma}) is to use a small set of $\vF_q$-rational points of a curve over  a finite field. Many research has been done in this direction \cite{Korchmaros1983Examples, Voloch1990Completeness, Voloch1991NonSquare, Schoof1987nonsingular, Szonyi1985Small, Giulietti2002Cubic} and the currently best known bound using this method gives $O(q^{3/4})$

The case $m>2$ is much more complex and the currently known constructions either use the tehory of $2$-character sets (see \cite[Sects. 12.2 and 12.3]{hirschfeld1998projective} and \cite{Hamilton2001Sets})  use or use the theory of certain special family of curves \cite{Giulietti2007Completeness, Hirschfeld1988Characterization, Tallini1970Graphic, Bartoli2017Completeness}.
For $m = 3$ and $m=4$ \cite{Bartoli2016K3, Bartoli2017K4} construct small complete $m$-arcs of size $o(q)$ using subsets of the $\mathbb{F}_q$-rational points of a curve of degree $m+1$.
The paper \cite{Bartoli2022Complete} construct a family of curves that give rise to $m$-arcs of size smaller than $q$ over certain subsequence of extension fields for all $m>8$. In fact, the authors show that they can construct a curve and a sequence of positive integers $n_k$ such that the number of $\vF_{q^{n_k}}$ rational points of $\mathcal C$ give rise to an $m$-arc of size roughly $q^{n_k}-C\sqrt{q^{n_k}}$ for an explicit $C$. Using a similar Galois theoretical approach as \cite{Bartoli2022Complete}, in \cite{korchmaros2023algebraic} the authors were able to construct complete $m$-arcs using the Hermitian curve of degree $q+1$ and the BKS curve of degree $q+1$.

The purpose of this paper is to provide a general theory to construct families of complete $m$-arcs arising from curves that satisfy explicit and easy to verify generic conditions.
As a proof of concept, we employ our methods in the case of hyperelliptic curves and Artin-Schreier curves, showing that they can be used to construct a complete $m$-arcs.

Apart from their fundamental interest in combinatorics, complete $m$-arcs of size $k$ have an important connection with coding theory, as they correspond to codes over $\vF_q$ of length $k$, dimension $3$ and distance $k-m$ (in the coding theoretical notation, $[k,3,k-m]_q$-codes) that cannot be extended to a code with same dimension and larger minimum distance. In fact, given an $m$-arc $A$, one can place as columns of a $3\times k$ matrix $G$ representatives in $\vF_q^3$ of the points of  the $m$-arc. Let $\mathcal C$ be the code generated by $G$. Now, for every $x\in \vF_q^3$, the codeword $xG$ has weight at least $k-m$ or otherwise one could find $m+1$ collinear points in $A$.

A general introduction to $(k, m)$-arcs can be found in the monograph \cite[Chapter 12]{sherk1981jwp}, as well as in the survey paper \cite[Section 5]{hirschfeld1998packing}.

\subsection{Outline of the paper}
We fist provide an introduction to the basic techniques and notations that we use in the paper 
(Subsection \ref{subsec:notation} and Section \ref{subsec:back}).
Then,  we describe the auxiliary results that we will use to convert geometric properties into the arithmetic ones (Section \ref{sec:AGUP}). Then, we provide the main result (Section \ref{sec:mainresult}). Finally, to show the effectiveness of our results, we give two applications (Section \ref{sec:applications}) to hyperelliptic and Artin-Schreier curves.

\subsection{Notation}\label{subsec:notation}
Let $x$ be a transcendental element over $\vF_q$ and $\vF_q(x)$ be the rational function field in the variable $x$.
For a curve $\mathcal C$ we denote by  $\mathcal C (\vF_q)$  the $\vF_q$-rational points  of $\mathcal C$, and by $\vF_q(\mathcal C)$ its function field over $\vF_q$. For a polynomial $F(X_0,\ldots,X_n)$ we denote by $\deg(F)$ the total degree of $F$, whereas $\deg_{X_i}(F)$ denotes the maximal degree of $X_i$ occurring in $F$. In the following we denote by $\vP^2(\vF_q)$ the $\vF_q$ rational points of the projective plane $\vP^2$ over $\vF_q$, and with $\vA^2(\vF_q)=\vP^2(\vF_q)\setminus \ell_{\infty}$ the $\vF_q$ rational points of the affine plane $\vA^2$, where $\ell_{\infty}$ is the line at infinity $[X,Y,0]$.

Let $F=F(X_0,X_1,X_2)=0$ be the homogeneous equation of an irreducible curve $\mathcal C$ of degree $n$ in $\mathbb{P}^2$. For $i\in \{1,2,3\}$ let $F_i=\frac{\partial}{\partial X_i}F(X_0,X_1,X_2)$ be the partial derivative of $F$ with respect the $i$-th variable. Then the dual curve $\mathcal C^*$ of $\mathcal C$ is defined as the closure of the image of the Gauss map
\begin{align*}
    \mathcal{G}:\mathcal C & \rightarrow \vP^2 \\
    (X_0,X_1,X_2) & \mapsto (F_0(X_0,X_1,X_2),F_1(X_0,X_1,X_2),F_2(X_0,X_1,X_2))
\end{align*}
For a planar irreducible curve $\mathcal C$, let $g(x,y)=0$ be its affine equation.
We say that $\vF_q(\mathcal C)$ is isomorphic to $\vF_q(\mathcal C^*)$ via $\mathcal G$, and write $\vF_q(\mathcal C)\cong_{\mathcal G} \vF_q(\mathcal C^*)$, if
\begin{equation}\label{eq:gaussmapfunc}
\vF_q(x,y)=\vF_q\left(\frac{\partial_x g(x,y)}{\partial_y g(x,y)}, x\frac{\partial_x g(x,y)}{\partial_y g(x,y)}+y \right).
\end{equation}
Since $\mathcal C$ is a planar irreducible curve, we have that $\mathcal C^*$ is also  planar and irreducible.
Observe that $\mathcal G(\mathcal C)$ is closed when $\mathcal C$ is non-singular. 

\section{Background}\label{subsec:back}
\subsection{Arcs}
A $(k,m)$-arc $\mathcal A$ in $\vP^2(\vF_q)$ is a set of $k$ points no $m+1$ of which are collinear and such that there exist $m$ collinear points. The arc $\mathcal A$ is called a \textit{complete} $(k,m)$-arc if it is maximal with respect to the set theoretical inclusion among all $m$-arcs. In particular, every point in the complement of a complete $m$-arc $\mathcal A$ is aligned with $m$ points in $\mathcal A$.

\subsection{Rational maps between curves}\label{morphism}
We recall in this part some facts about rational maps and birational morphisms. For a complete exposition of the theory see for example \cite{shafarevich, silverman2009arithmetic, stichtenoth}. 

Let $\vK$ be an algebraically closed field and $\mathcal V\subset \vP^n(\vK)$ a projective variety, i.e. an irreducible algebraic set of zeros in $\vK$ of homogeneous polynomials with coefficients in $\vK$. In the notation of this paper, an algebraic curve is simply a projective variety of dimension one.
Let $\vK(\mathcal V)$ (resp. $\vK[\mathcal V]$) denote the function field (resp. the homogeneous coordinate ring) of $\mathcal V$. A rational function $\varphi\in\vK(\mathcal V)$ is called \textit{regular} at $x\in \mathcal V$ if it can be written as $\varphi=\frac{f}{g}$ where $f,g\in\vK[\mathcal V]$ and $g(x)\neq 0$. The value of $\varphi$ at the point $x$ is $\frac{f(x)}{g(x)}$ and it is denoted by $\varphi(x)$. A rational function that is regular at every $x\in \mathcal V$ is called \textit{regular function}.

Let $\mathcal W\subset\vP^m(\vK)$ a projective variety. A \textit{rational map} $\varphi:\mathcal V\dashrightarrow \mathcal W$ is a $m$-tuple of $\varphi_0,\ldots,\varphi_m\in\vK(\mathcal V)$ such that $\varphi(x)=[\varphi_0(x),\ldots,\varphi_m(x)]\in \mathcal W$ for all points $x\in \mathcal V$ at which all the $\varphi_i$ are regular. The rational map $\varphi$ is said to be \textit{regular} at the points $x\in \mathcal V$ where it is defined. A rational map that is regular at every point $x\in \mathcal V$ is called a \textit{morphism}.

The following result states that, for a non-singular curve $\mathcal C$ a rational map is defined at every point (see \cite[Proposition 2.1]{silverman2009arithmetic}).

\begin{proposition}\label{propregular}
Let $\mathcal C$ be an algebraic curve and $\mathcal W\subset \vP^m$ be a variety. Let $P\in\mathcal C$ be a non-singular point and let $\varphi:\mathcal C\dashrightarrow \mathcal W$ be a rational map. Then $\varphi$ is regular at $P$. In particular, if $\mathcal C$ is non-singular, then $\varphi$ is a morphism.
\end{proposition}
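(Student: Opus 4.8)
The plan is to reduce everything to the fact that at a non-singular point of a curve the local ring is a discrete valuation ring. First I would recall (citing \cite{shafarevich, silverman2009arithmetic, stichtenoth}) that if $P\in\mathcal C$ is a non-singular point, then $\O_{\mathcal C,P}\subseteq \vK(\mathcal C)$ is a one-dimensional regular Noetherian local domain, hence a discrete valuation ring. Let $v=\mathrm{ord}_P$ be its normalized valuation and fix a uniformizer $t\in\O_{\mathcal C,P}$ with $v(t)=1$. The point of this setup is the standard dichotomy: every nonzero $f\in\vK(\mathcal C)$ factors as $f=u\,t^{v(f)}$ with $u\in\O_{\mathcal C,P}^{\times}$, so $f$ is regular at $P$ exactly when $v(f)\ge 0$, and in that case $f(P)\neq 0$ iff $v(f)=0$.

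Next I would use the homogeneity of the target to rescale the chosen representation of $\varphi$. Write $\varphi=[\varphi_0:\cdots:\varphi_m]$ with $\varphi_i\in\vK(\mathcal C)$ not all zero, and set $n:=\min\{v(\varphi_i):\varphi_i\neq 0\}$, a minimum of finitely many integers, attained at some index $j$. Replacing each $\varphi_i$ by $\psi_i:=t^{-n}\varphi_i$ gives the same rational map, since the tuples differ by the common nonzero scalar $t^{-n}\in\vK(\mathcal C)$. By construction $v(\psi_i)=v(\varphi_i)-n\ge 0$ for every $i$, so each $\psi_i$ lies in $\O_{\mathcal C,P}$ and is regular at $P$, while $v(\psi_j)=0$ forces $\psi_j(P)\neq 0$. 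Hence $[\psi_0(P):\cdots:\psi_m(P)]$ is a well-defined point of $\vP^m$, so $\varphi$ is regular at $P$ as a rational map to $\vP^m$.

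Finally I would check that this value actually lands in $\mathcal W$. For any homogeneous $G$ in the homogeneous ideal of $\mathcal W$, the rational function $G(\psi_0,\dots,\psi_m)\in\vK(\mathcal C)$ vanishes on the dense open subset of $\mathcal C$ where all the original $\varphi_i$ are regular (there $\varphi$ takes values in $\mathcal W$), hence it is identically zero and in particular vanishes at $P$; so $[\psi_0(P):\cdots:\psi_m(P)]\in\mathcal W$. This shows $\varphi$ is regular at $P$ as a map to $\mathcal W$. The final assertion is then immediate: if $\mathcal C$ is non-singular this argument applies at every point, so $\varphi$ is regular everywhere and therefore a morphism.

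I expect the only genuinely non-formal ingredient to be the identification of $\O_{\mathcal C,P}$ with a discrete valuation ring at a smooth point; everything after that is bookkeeping with a single valuation. Depending on how much valuation-theoretic background one imports from \cite{shafarevich, silverman2009arithmetic, stichtenoth}, this step is either a one-line citation or requires recalling that a regular Noetherian local domain of Krull dimension one is a DVR (equivalently, is integrally closed), which in turn is where the hypothesis that $P$ is a \emph{non-singular} point is used.
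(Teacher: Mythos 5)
Your proof is correct, and it coincides with the paper's treatment: the paper does not prove this proposition itself but cites \cite[Proposition 2.1]{silverman2009arithmetic}, whose proof is precisely your argument (at a smooth point the local ring $\mathcal O_{\mathcal C,P}$ is a discrete valuation ring; rescale the coordinate functions by $t^{-n}$ with $n$ the minimal order at $P$, then evaluate, and use vanishing of the homogeneous forms of $\mathcal W$ on a dense open set to see the value lies in $\mathcal W$). So you have reproduced essentially the same standard approach, with no gaps.
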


A morphism $\varphi:\mathcal V\rightarrow \mathcal W$ is said to be an \textit{isomorphism} if there exists a morphism $\psi:\mathcal W\rightarrow \mathcal V$ such that $\varphi\circ\psi$ and $\psi\circ\varphi$ are the identity maps on $\mathcal W$ and $\mathcal V$ respectively. Two varieties $\mathcal V$ and $\mathcal W$ are said to be \textit{birationally equivalent} (or just \textit{birational}) if there exist rational maps $\varphi:\mathcal V\dashrightarrow \mathcal W$ and $\psi:\mathcal W\dashrightarrow \mathcal V$ such that $\varphi\circ\psi$ and $\psi\circ\varphi$ are the identity maps on the points of $\mathcal W$ and $\mathcal V$ where they are defined. Each rational map $\varphi$ and $\psi$ is called a \textit{birationally equivalence}. If two varieties are isomorphic they are birationally equivalent, but the converse is generally not true.

\begin{remark}\label{DefOnNSingular}
If $\varphi:\mathcal V\dashrightarrow \mathcal W$ is a birationally equivalence, then $\varphi$ is a 1-to-1 correspondence everywhere except at non-regular points of $\mathcal V$ and $\mathcal W$. Therefore, as a consequence of Proposition \ref{propregular}, if $\mathcal C$ is a non-singular curve, then $\varphi:\mathcal C\dashrightarrow \mathcal W$ is defined everywhere on $\mathcal C$, and 1-to-1 only at \textit{regular} points of $\mathcal W$. This means that in the particular case when $\mathcal W$ is a curve too, then $\phi$ is defined everywhere on $\mathcal C$, and 1-to-1 only at \textit{non-singular} points of $\mathcal W$.
\end{remark}

\subsection{Non-singular model of a curve}\label{model}
For the results in this subsection we refer to \cite[Appendix B]{stichtenoth}. Let $\mathcal V$ be a projective curve. Then there exists a non-singular projective curve $\mathcal V_{des}$ and a birational morphism $\pi:\mathcal V_{des}\rightarrow \mathcal V$ that satisfies the following universal property: if $\mathcal V'$ is another non-singular projective curve, and $\pi':\mathcal V'\rightarrow \mathcal V$ is a birational morphism, then there exists a unique isomorphism $\phi:\mathcal V_{des}\rightarrow \mathcal V'$ such that $\pi=\pi'\circ\phi$. This is to say that the following diagram commutes
\begin{center}
\begin{tikzcd}
\mathcal V_{des} \arrow[r, "\pi"] \arrow[rd, swap, "\phi"] &  \mathcal V\\
 
 & \mathcal V' \arrow[u,  swap, "\pi' "]
\end{tikzcd}
\end{center}
The pair $(\mathcal V_{des},\pi)$, or simpler just $\mathcal V_{des}$, is called the non-singualr model of $\mathcal V$.

\subsection{Galois Theory over Global Fields}
In this section we use the notation and terminology of \cite{stichtenoth}. 
For a field $M$ we denote by $\Aut(M)$ the automorphism group of $M$.
Let $L\supseteq K$ be a separable field extension and let $M$ be the Galois closure.
We denote by $\Gal(M:K)$ the Galois group of $M:K$, i.e. $\{g\in \Aut(M): \;g(x)=x \;\forall x \in K\}$.
The next result is a consequence of \cite[Satz 1]{derWaerden}.
\begin{lemma}[\textbf{Orbits' Lemma}]\label{orbits}
Let $L:K$ be a finite separable extension of function fields, let $M$ be its Galois closure and $G:=\Gal(M:K)$ be its Galois group. Let $P$ be a place of $K$ and $\mathcal Q$ be the set of places of $L$ lying above $P$. Let $R$ be a place of $M$ lying above $P$. Then we have the following:
\begin{itemize}
	\item[1)] There is a natural bijection between $\mathcal Q$ and the set of orbits of $H:=\Hom_K(L,M)$ under the action of the decomposition group $D(R|P)=\lbrace g\in G \;|\; g(R)=R\rbrace$.
	\item[2)] Let $Q\in\mathcal Q$ and let $H_Q$ be the orbit of $D(R|P)$ corresponding to $Q$. Then $|H_Q|=e(Q|P)f(Q|P)$ where $e(Q|P)$ and $f(Q|P)$ are the ramification index and relative degree, respectively.
	\item[3)] The orbit $H_Q$ partitions further under the action of the inertia group $I(R|P)=\lbrace g\in D(R|P) \;|\; v_R(g(s)-s)\geq 1 \; \forall s\in\mathcal O_R \rbrace$ into $f(Q|P)$ orbits of size $e(Q|P)$.
\end{itemize}
\end{lemma}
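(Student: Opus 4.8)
The plan is to transport the whole statement into the Galois correspondence for $M/K$ and then extract (1)--(3) from the orbit--stabilizer theorem together with the standard ramification theory of Galois extensions of function fields: the identities $|D(S|P)|=e(S|P)f(S|P)$ and $|I(S|P)|=e(S|P)$, the multiplicativity of $e$ and $f$ in towers, the invariance of $e(\cdot|P)$ and $f(\cdot|P)$ under $\Gal(M:K)$, and the compatibilities $D(S|Q)=D(S|P)\cap\Gal(M:L)$ and $I(S|Q)=I(S|P)\cap\Gal(M:L)$ for a tower $M\supseteq L\supseteq K$ (see \cite[Ch.~III]{stichtenoth}; the residue extensions here are separable, so all of these apply). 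Part (1) is essentially a reformulation of \cite[Satz~1]{derWaerden}, but it is short to reprove directly.

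First I would fix an embedding $L\subseteq M$ and set $U:=\Gal(M:L)$; since $M/K$ is normal, so is $M/L$, hence $M/L$ is Galois and the places of $M$ lying over a fixed place of $L$ form a single $U$-orbit. The group $G$ acts on $H=\Hom_K(L,M)$ by $g\cdot\sigma=g\circ\sigma$, and $D:=D(R|P)$ acts by restriction. To $\sigma\in H$ I attach the place $Q_\sigma$ of $L$ obtained by pulling $R$ back along $\sigma$, i.e.\ the place whose valuation ring is $\{x\in L:\ v_R(\sigma(x))\ge 0\}$; equivalently $Q_\sigma=\tilde\sigma^{-1}(R)\cap L$ for any $\tilde\sigma\in G$ with $\tilde\sigma|_L=\sigma$, which shows $Q_\sigma$ is independent of the extension chosen. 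For (1) I would then check three things. \emph{(i)} $Q_\sigma$ is constant on $D$-orbits: if $\tau=g\circ\sigma$ with $g\in D$ then, for $x\in L$, $v_R(\tau(x))=v_{g^{-1}(R)}(\sigma(x))=v_R(\sigma(x))$ because $g$ fixes $R$, so $Q_\tau=Q_\sigma$. \emph{(ii)} Every $Q\in\mathcal Q$ is hit: choose a place $R'$ of $M$ over $Q$; since $M/K$ is Galois, $G$ acts transitively on the places over $P$, so $g^{-1}(R)=R'$ for some $g\in G$, and then $\sigma:=g|_L$ satisfies $Q_\sigma=R'\cap L=Q$. \emph{(iii)} The fibres are single $D$-orbits: if $Q_\sigma=Q_\tau$, then $\tilde\sigma^{-1}(R)$ and $\tilde\tau^{-1}(R)$ are two places of $M$ over this common place of $L$, so $u\cdot\tilde\sigma^{-1}(R)=\tilde\tau^{-1}(R)$ for some $u\in U$, whence $g:=\tilde\tau u\tilde\sigma^{-1}\in D$ and, restricting to $L$ where $u$ acts trivially, $\tau=g\circ\sigma$. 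This establishes the natural bijection $D\sigma\leftrightarrow Q_\sigma$ of (1).

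For (2) and (3), fix $\sigma\in H$ with $Q_\sigma=Q$ and put $R':=\tilde\sigma^{-1}(R)$, a place of $M$ over $Q$ over $P$. One has $\mathrm{Stab}_G(\sigma)=\Gal(M:\sigma(L))=\tilde\sigma U\tilde\sigma^{-1}$ and $\tilde\sigma^{-1}D\tilde\sigma=D(R'|P)$, so $\mathrm{Stab}_D(\sigma)=D\cap\tilde\sigma U\tilde\sigma^{-1}=\tilde\sigma(D(R'|P)\cap U)\tilde\sigma^{-1}=\tilde\sigma\,D(R'|Q)\,\tilde\sigma^{-1}$, and the same computation with $I:=I(R|P)$ gives $\mathrm{Stab}_I(\sigma)=\tilde\sigma\,I(R'|Q)\,\tilde\sigma^{-1}$. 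By orbit--stabilizer, and using $e(R|P)=e(R'|P)=e(R'|Q)e(Q|P)$ and $f(R|P)=f(R'|P)=f(R'|Q)f(Q|P)$,
\[
|H_Q|=\frac{|D(R|P)|}{|D(R'|Q)|}=\frac{e(R'|P)\,f(R'|P)}{e(R'|Q)\,f(R'|Q)}=e(Q|P)\,f(Q|P),
\]
which is (2). For (3), each $I$-orbit inside $H_Q$ has size $|I(R|P)|/|I(R'|Q)|=e(R'|P)/e(R'|Q)=e(Q|P)$; since $I\trianglelefteq D$, the group $D$ permutes these $I$-orbits among themselves, transitively because it is already transitive on $H_Q$, so all of them have size $e(Q|P)$ and hence their number is $|H_Q|/e(Q|P)=f(Q|P)$.

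I expect the main obstacle to be bookkeeping rather than any real difficulty: one must keep straight that $\mathrm{Stab}_D(\sigma)$ and $\mathrm{Stab}_I(\sigma)$ correspond, after conjugation by $\tilde\sigma$, to the decomposition and inertia groups of $R'$ \emph{relative to $Q$} (not to $P$), which is exactly where the compatibilities $D(R'|Q)=D(R'|P)\cap\Gal(M:L)$ and $I(R'|Q)=I(R'|P)\cap\Gal(M:L)$ enter, and that the restriction map on places from $M$ to $L$ has fibres equal to single $U$-orbits. With those standard facts in hand, (1)--(3) reduce to orbit counting.
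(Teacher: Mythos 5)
Your proof is correct. Note that the paper itself gives no argument for this lemma: it is stated as a consequence of van der Waerden's Satz 1 in \cite{derWaerden}, so there is no in-text proof to compare against. What you have written is essentially the standard argument underlying that citation: identify the fibres of $\sigma\mapsto Q_\sigma$ with the $D(R|P)$-orbits on $\Hom_K(L,M)$ (equivalently, the double cosets $D(R|P)\backslash G/\Gal(M:L)$), then count via orbit--stabilizer using $\mathrm{Stab}_D(\sigma)=\tilde\sigma\,D(R'|Q)\,\tilde\sigma^{-1}$ and $\mathrm{Stab}_I(\sigma)=\tilde\sigma\,I(R'|Q)\,\tilde\sigma^{-1}$, together with $|D|=ef$, $|I|=e$, multiplicativity in towers, and Galois-invariance of $e$ and $f$. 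All the individual steps check out: the well-definedness of $Q_\sigma$ independently of the lift $\tilde\sigma$, the use of transitivity of $\Gal(M:L)$ on places of $M$ above a place of $L$ in step (iii), and the compatibilities $D(R'|Q)=D(R'|P)\cap\Gal(M:L)$, $I(R'|Q)=I(R'|P)\cap\Gal(M:L)$ are exactly the right ingredients. You also correctly flag the one hypothesis that makes $|I(R|P)|=e(R|P)$ legitimate, namely separability of the residue field extensions, which is automatic in the paper's setting since the constant fields are finite or algebraically closed. The only thing your write-up buys beyond the paper is self-containedness; conversely, the paper's citation hides nothing deeper than what you have reproduced.
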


The next result gives a characterization of the Galois group using the inertia groups. For a reference see for example \cite{stichtenoth}.
\begin{lemma}\label{InertiaLemma}
Let $L:K$ be a finite separable extension of function fields, let $M$ be its Galois closure and let $G:=\Gal(\bar\vF_q M:\bar\vF_q K)$. Then $G$ is generated by the inertia groups $I(R|P)$, i.e.
\begin{equation*}
G=\langle I(R|P) : P\text{ place of }K,\; R|P,\; R\text{ place of }M \rangle
\end{equation*}
\end{lemma}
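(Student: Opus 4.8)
The claim is that the Galois group $G = \Gal(\bar\vF_q M : \bar\vF_q K)$ of the (geometric) Galois closure $M$ of a finite separable extension $L:K$ is generated by the inertia subgroups $I(R|P)$ as $P$ ranges over places of $K$ and $R$ over places of $M$ above $P$. The plan is to argue by contradiction using the fixed field of the subgroup generated by all inertia groups. Let $N = \langle I(R|P) : P, R|P\rangle \trianglelefteq G$ (it is normal because conjugating an inertia group by $g \in G$ yields $I(g(R)|P)$, so the family of all inertia groups is closed under conjugation). Let $E = M^N$ be the fixed field, so that $\bar\vF_q K \subseteq E \subseteq \bar\vF_q M$. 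The goal is to show $E = \bar\vF_q K$, equivalently $N = G$.

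The key point is that every place $P$ of $\bar\vF_q K$ is \emph{unramified} in the extension $E : \bar\vF_q K$. Indeed, fix a place $P$ of $\bar\vF_q K$, a place $R$ of $\bar\vF_q M$ above it, and let $R_E = R \cap E$. Because $N \supseteq I(R|P)$, and the inertia group of $R$ over $R_E$ in the extension $\bar\vF_q M : E$ is $I(R|P) \cap \Gal(\bar\vF_q M : E) = I(R|P) \cap N = I(R|P)$, multiplicativity of ramification indices in the tower $\bar\vF_q K \subseteq E \subseteq \bar\vF_q M$ together with $e(R|R_E) = |I(R|P)| = e(R|P)$ forces $e(R_E|P) = 1$. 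Since $P$ and $R$ were arbitrary, $E:\bar\vF_q K$ is an everywhere-unramified extension of function fields over the algebraically closed constant field $\bar\vF_q$. By the Hurwitz genus formula (or the Riemann--Hurwitz relation, see \cite{stichtenoth}), an unramified extension of a function field with algebraically closed constant field satisfies $2g_E - 2 = [E:\bar\vF_q K](2g_K - 2)$; but over an algebraically closed field one also knows there are no unramified extensions of $\bar\vF_q(x)$ other than the trivial one when $g_K = 0$, and more generally one invokes that the only unramified extension realized inside a fixed separable closure with the \emph{same} constant field and trivial ramification which is also contained in the Galois closure of a geometric cover must be trivial --- the cleanest route is: an everywhere unramified subextension $E:\bar\vF_q K$ inside $M$ corresponds, via the theory of the algebraic fundamental group, to a quotient of $\pi_1$ that is trivial once we observe $M$ itself is generated over $\bar\vF_q K$ by adding roots that do ramify somewhere.

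The cleanest self-contained argument: since $L:K$ is separable and $L \neq K$ is a geometric extension, $L$ is ramified over $K$ at some place (otherwise $L:K$ unramified everywhere would, by Riemann--Hurwitz over $\bar\vF_q$, be impossible for a nontrivial extension — here one uses that the constant field extension contributes nothing and genuine geometric covers of curves are ramified), hence some inertia group is nontrivial; but we need more, namely that they generate everything. I would instead deduce it directly: $\bar\vF_q M$ is generated as a field over $\bar\vF_q K$ by the conjugates of a primitive element of $\bar\vF_q L$, and $\Gal(\bar\vF_q M : E)$ being all of $N$ means $E$ is the largest subextension unramified everywhere over $\bar\vF_q K$; applying the same Riemann--Hurwitz argument to $E:\bar\vF_q K$ (a nontrivial unramified geometric extension is impossible when one can exhibit, using that $M$ is a cover ramified somewhere and $E \subseteq M$ is a further subcover, that unramifiedness propagates) yields $E = \bar\vF_q K$.

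\textbf{Main obstacle.} The crux --- and the step I expect to require the most care --- is justifying rigorously that a \emph{nontrivial everywhere-unramified extension of function fields over an algebraically closed constant field cannot sit inside the Galois closure $\bar\vF_q M$ of a given geometric cover unless it is trivial in our situation}. In general such unramified extensions do exist (they correspond to isogenies / the fundamental group is nontrivial for $g \geq 1$), so the lemma as stated must be using that $M$ is the Galois closure of a \emph{specific} cover $L:K$: the inertia groups that generate $N$ are exactly those at places ramifying in $M:K$, and the fixed field $E = M^N$ is then forced to be an unramified subextension of $M$ over $K$ that is simultaneously \emph{contained in} the compositum generated by the ramified-cover data, which over $\bar\vF_q$ collapses. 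Making this collapse precise --- ideally by the argument that $\Gal(\bar\vF_q M : \bar\vF_q K)$ is topologically generated by inertia because $M$ is obtained from $K$ by a tower each step of which is ramified, invoking \cite[Satz 1]{derWaerden} or the structure theory in \cite[Appendix]{stichtenoth} --- is where the real content lies; the tower/multiplicativity bookkeeping for ramification indices is routine once that principle is in hand.
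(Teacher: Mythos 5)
Your first half is correct and is the standard opening: the subgroup $N=\langle I(R|P)\rangle$ is normal, its fixed field $E=(\bar\vF_q M)^N$ satisfies $I(R|R_E)=I(R|P)\cap N=I(R|P)$, and since the residue fields are algebraically closed, $e(R|R_E)=|I(R|P)|=e(R|P)$, so multiplicativity gives $e(R_E|P)=1$ for every place; hence $E:\bar\vF_q K$ is unramified everywhere. The gap is exactly the step you flag as the main obstacle, and neither of the two routes you sketch can close it. The Hurwitz genus formula with trivial different gives $2g_E-2=[E:\bar\vF_q K]\,(2g_{\bar\vF_q K}-2)$, which is contradictory only when $g_{\bar\vF_q K}=0$; your assertion that ``genuine geometric covers of curves are ramified'' is false for genus $\geq 1$. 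The hoped-for ``collapse'' coming from $M$ being the Galois closure of a specific cover $L:K$ is also not available: if $K$ is the function field of an elliptic curve and $L:K$ is the quadratic extension coming from an unramified isogeny of degree $2$, then $M=L$ is its own Galois closure, every inertia group is trivial, and $G\cong \Z/2\Z$ is not generated by inertia (a compositum of a ramified and an unramified quadratic extension gives a similar example with $L:K$ ramified somewhere). So the statement cannot be proved in the generality in which it is phrased; no bookkeeping with the Galois-closure structure will rescue it.

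What makes the lemma legitimate where it is used is the genus-zero input you mention in passing but do not incorporate: in the paper the lemma is applied only with $K=\vF_q(t)$, so $\bar\vF_q K=\bar\vF_q(t)$ and your unramifiedness computation plus Hurwitz (zero different) yields $2g_E-2=-2\,[E:\bar\vF_q(t)]$, forcing $[E:\bar\vF_q(t)]=1$, i.e.\ $N=G$. (The paper itself offers no argument, only a pointer to \cite{stichtenoth}; the standard proof is precisely your reduction followed by this genus-zero/simple-connectedness step, which must be added either as a hypothesis on $K$ or at the point of application.) As written, your proposal identifies the right strategy but does not prove the statement, and the justification you propose for the decisive step would fail.
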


The following result follows from \cite[Proposition 1]{hashimoto}.
\begin{proposition}\label{PropTransp}
Let $G$ be a transitive subgroup of $S_n$ generated by transpositions. Then $G=S_n$.
\end{proposition}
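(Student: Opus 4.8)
The plan is to recode the generating transpositions as a graph on $\{1,\ldots,n\}$ and to extract the statement from connectivity of that graph. Write $G=\langle\tau_1,\ldots,\tau_r\rangle$ with each $\tau_k=(a_k\ b_k)$ a transposition, and let $\Gamma$ be the graph with vertex set $V=\{1,\ldots,n\}$ and edge set $E=\{\,\{a_k,b_k\}\,:\,1\le k\le r\,\}$.

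First I would check that $\Gamma$ is connected. Each generator $\tau_k$ only swaps the two endpoints of a single edge, which lie in the same connected component of $\Gamma$; hence $\tau_k$ --- and therefore every element of $G$ --- fixes setwise each connected component of $\Gamma$. If $\Gamma$ had at least two components, the vertex set of one of them would be a nonempty proper $G$-invariant subset of $V$, contradicting transitivity. So $\Gamma$ is connected.

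Next I would show that $G$ contains every transposition of $S_n$, using the identity $(a\ c)=(a\ b)(b\ c)(a\ b)$, valid for pairwise distinct $a,b,c$: whenever $(a\ b),(b\ c)\in G$ we get $(a\ c)\in G$. Given $i\ne j$ in $V$, connectivity yields a simple path $i=v_0,v_1,\ldots,v_\ell=j$ in $\Gamma$, so each $(v_s\ v_{s+1})$ lies in $G$; an induction on $s$ (applying the identity to the distinct triple $v_0,v_s,v_{s+1}$) gives $(v_0\ v_s)\in G$ for all $s$, and in particular $(i\ j)=(v_0\ v_\ell)\in G$. Since $S_n$ is generated by its transpositions, $G=S_n$.

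The argument is short and I do not expect a genuine obstacle; the two places deserving a moment's care are the reduction from transitivity to connectivity (one must make sure the $G$-invariant set built from the components of $\Gamma$ is simultaneously nonempty and proper, which is exactly what having at least two components buys) and the choice of a \emph{simple} path in the last step, so that the vertices entering the conjugation identity are genuinely distinct.
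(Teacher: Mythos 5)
Your proof is correct, and it is complete as written: the reduction from transitivity of $G$ to connectivity of $\Gamma$ is sound (a union of components invariant under every generator is invariant under all of $G$, and a nonempty proper invariant subset would split the single orbit), and your insistence on a \emph{simple} path makes the triple $v_0,v_s,v_{s+1}$ genuinely distinct, so the conjugation identity $(a\ c)=(a\ b)(b\ c)(a\ b)$ applies at every step of the induction. The route differs from the paper only in that the paper does not prove the statement at all: it invokes it as a consequence of Proposition~1 of the cited work of Hashimoto, which is a more general result on transitive groups generated by transpositions. Your argument is the classical elementary one (encode the generating transpositions as edges of a graph, use transitivity to get connectivity, then propagate transpositions along paths), and its advantage is that it is self-contained and uses nothing beyond the definition of transitivity and the generation of $S_n$ by transpositions; the citation route is shorter on the page and situates the statement inside a more general framework, but gives the reader no standalone argument. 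Either is acceptable; yours could even be inserted as a short proof in place of the citation.
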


\section{From geometry to Galois theory}\label{sec:AGUP}
For this section, let $\vK$ be an algebraically closed field, let $\vP^2=\vP^2(\vK)$, and let $r$ be a line in $\vP^2$. Remark \ref{rem:extgalgeo} combined with Chebotarev Density Theorem explains one way to transfer geometric information obtained over 
$\vP^2(\overline \vF_q)$ to arithmetic information in $\vP^2(\vF_q)$, which will be needed to construct $m$-arcs in $\vP^2(\vF_q)$ in Section \ref{sec:mainresult}.
\begin{definition}
The line $r\subseteq \vP^2$ is said to be a \emph{bitangent} to a curve $\mathcal C$ if it has at least two points of intersection with $\mathcal C$ where it is tangent to $\mathcal C$. 
\end{definition}

We start with a preliminary lemma.
\begin{lemma}\label{lemma0}
Let $\mathcal C\subseteq \vP^2$ be a planar, irreducible, projective curve of degree $m$ over a finite field $\vF_q$ with singular points contained in the line at infinity. Let $\mathcal C^*$ be the dual curve of $\mathcal C$ and suppose $\vF_q(\mathcal C)\cong_{\mathcal G}\vF_q(\mathcal C^*)$, that is $\vF_q(\mathcal C)$ and $\vF_q(\mathcal C^*)$ are isomorphic through the Gauss map $\mathcal G$. Then there are at most $\mathcal O(m^4)$ affine bitangents to the curve.
\end{lemma}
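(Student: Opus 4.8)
The goal is to bound the number of affine bitangents of $\mathcal C$ by $O(m^4)$ using the hypothesis that the Gauss map $\mathcal G$ induces an isomorphism of function fields $\vF_q(\mathcal C)\cong_{\mathcal G}\vF_q(\mathcal C^*)$. The key observation is that a bitangent line $r$ to $\mathcal C$ corresponds, under duality, to a point of $\mathcal C^*$ at which $\mathcal C^*$ is \emph{singular}: if $r$ is tangent to $\mathcal C$ at two distinct smooth points $P_1,P_2$, then the Gauss map sends both $P_1$ and $P_2$ to the same point $r^*\in\mathcal C^*$ (the coordinates of $r$), so $r^*$ is (at least) a double point of $\mathcal C^*$. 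Thus the plan is: (1) translate "affine bitangent to $\mathcal C$" into "singular point of $\mathcal C^*$ arising from two affine smooth points of $\mathcal C$"; (2) bound the number of singular points of $\mathcal C^*$; (3) control the degree of $\mathcal C^*$ so that step (2) gives the stated $O(m^4)$.

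First I would make step (1) precise. Since all singular points of $\mathcal C$ lie on $\ell_\infty$, every affine point of $\mathcal C$ is smooth, so the Gauss map is regular at every affine point (Proposition \ref{propregular} applied to the nonsingular model, together with the isomorphism hypothesis, ensures $\mathcal G$ behaves well). If $r$ is an affine bitangent with tangency points $P_1\neq P_2$ (at least one of which we may take affine; in fact for an \emph{affine} bitangent we count those where both tangency contributions are controlled), then $\mathcal G(P_1)=\mathcal G(P_2)=r^*$. Because $\vF_q(\mathcal C)\cong_{\mathcal G}\vF_q(\mathcal C^*)$, the Gauss map is birational onto $\mathcal C^*$, hence generically injective; a point where it fails to be injective forces $\mathcal C^*$ to be singular there (by Remark \ref{DefOnNSingular}, a birational map from the nonsingular model is 1-to-1 exactly over the smooth points of the target). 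So distinct affine bitangents give rise to distinct singular points $r^*$ of $\mathcal C^*$.

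Next, steps (2) and (3): I would bound $\deg\mathcal C^*$ and then its number of singular points. The degree of the dual of a plane curve of degree $m$ is at most $m(m-1)$ (the class of the curve; for an irreducible curve it is bounded by $m(m-1)$, with equality in the generic smooth case, and strictly less when there are singularities or inflections of higher order — in any case $\deg\mathcal C^*=O(m^2)$). Now a plane curve of degree $d$ has at most $\binom{d-1}{2}=O(d^2)$ singular points (a classical bound: an irreducible plane curve of degree $d$ has arithmetic genus $\binom{d-1}{2}$, and each singular point drops the genus by at least $1$). With $d=\deg\mathcal C^*=O(m^2)$ this yields $O(m^4)$ singular points of $\mathcal C^*$, hence at most $O(m^4)$ affine bitangents of $\mathcal C$ via the injection from step (1).

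**Main obstacle.** The delicate point is step (1): carefully justifying that two distinct \emph{affine} tangency points of a bitangent map to a genuine singular point of $\mathcal C^*$, and that this assignment (bitangent $\mapsto$ singular point) is injective — one must rule out that several different bitangents could map to the same singular point of $\mathcal C^*$, or that the isomorphism hypothesis $\cong_{\mathcal G}$ could collapse the picture in an unexpected way at the (finitely many) points of $\mathcal C^*$ coming from $\ell_\infty$. The hypothesis that singularities of $\mathcal C$ lie on $\ell_\infty$ is exactly what makes $\mathcal G$ well-defined and injective on the affine locus, and the isomorphism hypothesis is what guarantees $\mathcal G$ is birational (not just a morphism of possibly high degree, which would ruin the correspondence). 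A secondary technical nuisance is tracking constants uniformly in $m$ when $\deg\mathcal C^*$ degenerates (e.g. highly inflectional or cuspidal duals), but since we only claim an $O(m^4)$ bound, the crude inequalities $\deg\mathcal C^*\le m(m-1)$ and (number of singular points)$\le\binom{\deg\mathcal C^*-1}{2}$ suffice.
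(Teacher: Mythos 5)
Your proposal is correct and follows essentially the same route as the paper: an affine bitangent forces the (birational, by the hypothesis $\cong_{\mathcal G}$) Gauss map to identify two smooth affine points, which after passing to the nonsingular models can only happen over a singular point of $\mathcal C^*$, and the singular points of the dual curve of degree at most $m(m-1)$ are then $O(m^4)$. The only cosmetic difference is the final count: the paper bounds the singular points of $\mathcal C^*$ via Bezout, intersecting $G=0$ with $\partial G/\partial X_0=0$, whereas you use the arithmetic-genus bound $\binom{\deg \mathcal C^*-1}{2}$; both yield $O(m^4)$.
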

\begin{proof}
Let $F=F(X_0,X_1,X_2)=0$ be the homogeneous equation of $\mathcal C$ and $G=G(X_0,X_1,X_2)=0$ be the homogeneous equation of $\mathcal{C}^*$. Since $F$ has degree $m$, then $G$ has at most degree $m(m-1)$ (see for example \cite{nguyen2016plucker}).

Since $\vF_q(\mathcal C)\cong_{\mathcal G}\vF_q(\mathcal C^*)$, the Gauss map $\mathcal{G}:\mathcal{C}\cap\vA^2\rightarrow\mathcal C^*$ can be extended to a birational map $\mathcal G'$ between $\mathcal C$ and $\mathcal C^*$.

Let  $\mathcal C_{des}$ and $\mathcal C^*_{des}$  be the non-singular models of $\mathcal C$ and $\mathcal C^*$ respectively. Let $\pi_1:\mathcal C_{des}\rightarrow\mathcal C$ and $\pi_2:\mathcal C^*_{des}\rightarrow\mathcal C^*$ the birational morphisms attached to the non-singular models (satisfying the universal property shown in Subsection \ref{model}, and defined everywhere as explained in Subsection \ref{morphism}, notice in particular Remark \ref{DefOnNSingular}).

Notice that, thanks to Proposition \ref{propregular} the birational  map  $\mathcal G' \circ \pi_1$ is a birational morphism because $\mathcal C_{des}$ is non-singular.

By using the universal property of $\pi_2$ for the birational morphism $\mathcal G' \circ \pi_1:\mathcal C_{des} \dashrightarrow \mathcal C^*$ between $\mathcal C_{des}$ and $\mathcal C^*_{des}$ one obtains an isomorphism $\mathcal G_{des}$ that makes the diagram commutative.
\begin{center}
\begin{tikzcd}
\mathcal{C}\cap\mathbb{A}^2 \arrow[rrd, "\mathcal G"] \arrow[d, symbol=\subseteq]  & &  \\

\mathcal{C} \arrow[rr,dashed, "\mathcal G'"] & & \mathcal{C}^*\\

 & & \\

\mathcal{C}_{des} \arrow[uu,  "\pi_1"] \arrow[rr, "\mathcal G_{des}"]  & & \mathcal{C}^*_{des} \arrow[uu,  "\pi_2"]
\end{tikzcd}
\end{center}

The number of affine bitangents to $\mathcal C\cap \vA^2$ equals the number of points of $\mathcal C^*$ that have multiple preimages through $\mathcal G$. 
Our strategy is to  prove that the points of $\mathcal C^*$ that have multiple preimages via $\mathcal G$ are contained in the points of $\mathcal C^*$ that are singular. We will then bound the singular points with Bezout's Theorem. 
 Suppose that $P$ is a point of $\mathcal C^*$ that has a set $T\subseteq \mathcal C\cap \vA^2$ of preimages via $\mathcal G$ of size larger than or equal to $2$. 
 Let now $T'=\pi_1^{-1}(T)\subseteq \mathcal C_{des}$. Since the diagram commutes, we have that $\pi_2\circ \mathcal G_{des}(T')=\{P\}$, and therefore $\pi_2$ is mapping a set of points of size larger than $1$ to a single point $P$, so $P$ must be a singular point of $\mathcal C^*$.

 We can now estimate the number of singular points of $\mathcal{C}^*$ using Bezout's Theorem: these points are contained in the set of solutions of the equations $G=0$ and $G_0=\frac{\partial}{\partial X_0}G=0$, which are at most $\mathcal O(\deg (G) \deg (G_0) )=\mathcal O(m^4)$ because $G_0$ cannot divide $G$ due to irreducibility.
\end{proof}

\begin{lemma}\label{lemma1}
Let $\mathcal C\subseteq \vP^2$ be a planar, irreducible, non-singular, projective curve of degree $m$ over a finite field $\vF_q$ such that $\mathcal C$ is birationally equivalent to its dual curve $\mathcal C^*$ through the Gauss map. Then there are at most $\mathcal O(m^4)$ bitangents to the curve.
\end{lemma}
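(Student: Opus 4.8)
The plan is to mirror the proof of Lemma \ref{lemma0}, the only difference being that the global non-singularity of $\mathcal{C}$ lets us work directly on $\mathcal{C}$ instead of on $\mathcal{C}\cap\vA^2$ and makes the passage to a desingularization of $\mathcal{C}$ itself unnecessary. Let $F$ be the degree-$m$ homogeneous equation of $\mathcal{C}$ and $G$ that of $\mathcal{C}^*$, so $\deg G\le m(m-1)$ as recalled in the proof of Lemma \ref{lemma0}. Since $\mathcal{C}$ is non-singular, Proposition \ref{propregular} shows that the Gauss map $\mathcal{G}\colon\mathcal{C}\to\mathcal{C}^*$ is a morphism defined at every point of $\mathcal{C}$, and by hypothesis it is birational. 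Thus $\mathcal{C}$ is itself a non-singular projective model of $\mathcal{C}^*$, and applying the universal property of the non-singular model $(\mathcal{C}^*_{des},\pi_2)$ to the birational morphism $\mathcal{G}$ produces an isomorphism $\mathcal{G}_{des}\colon\mathcal{C}\to\mathcal{C}^*_{des}$ with $\mathcal{G}=\pi_2\circ\mathcal{G}_{des}$; this is the diagram of Lemma \ref{lemma0} in the degenerate case $\mathcal{C}_{des}=\mathcal{C}$, $\pi_1=\id$.

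Next I would set up the dictionary between bitangents and fibres of $\mathcal{G}$. Because $\mathcal{C}$ is smooth everywhere, each line tangent to $\mathcal{C}$ at a point $Q$ is exactly the line dual to $\mathcal{G}(Q)$; hence a line $r$ is a bitangent of $\mathcal{C}$ if and only if its dual point has at least two preimages under $\mathcal{G}$, and since $r\mapsto r^{\ast}$ is injective on lines, the number of bitangents of $\mathcal{C}$ equals the number of points of $\mathcal{C}^*$ with at least two $\mathcal{G}$-preimages. Now $\mathcal{G}=\pi_2\circ\mathcal{G}_{des}$ with $\mathcal{G}_{des}$ bijective, so $|\mathcal{G}^{-1}(P)|=|\pi_2^{-1}(P)|$ for every $P\in\mathcal{C}^*$; as $\pi_2$ is birational it is one-to-one over every non-singular point of $\mathcal{C}^*$ (Remark \ref{DefOnNSingular}), so any $P$ with $|\mathcal{G}^{-1}(P)|\ge 2$ is forced to be a singular point of $\mathcal{C}^*$. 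Therefore the number of bitangents of $\mathcal{C}$ is bounded by the number of singular points of $\mathcal{C}^*$.

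Finally I would bound the singular locus of $\mathcal{C}^*$ by Bézout, exactly as in Lemma \ref{lemma0}: the singular points lie in $\{G=0\}\cap\{\partial G/\partial X_0=0\}$, and since $\mathcal{C}^*$ is irreducible the polynomial $G$ is irreducible, hence not divisible by the strictly-lower-degree polynomial $\partial G/\partial X_0$, so these two curves meet in finitely many points, at most $\mathcal{O}(\deg G\cdot\deg(\partial G/\partial X_0))=\mathcal{O}(m^4)$ of them; this yields $\mathcal{O}(m^4)$ bitangents. I do not expect a serious obstacle, since the substantive work was already carried out in Lemma \ref{lemma0}; the one point that genuinely requires the full non-singularity hypothesis (rather than just ``singularities on $\ell_{\infty}$'') is that $\mathcal{G}$ be a morphism on all of $\mathcal{C}$ and that \emph{every} bitangent — including lines tangent to $\mathcal{C}$ at points at infinity — correspond to a multiple fibre of $\mathcal{G}$, which is precisely what allows us to drop the word ``affine'' from the conclusion.
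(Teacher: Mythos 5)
Your argument is correct and is essentially the paper's: the paper proves this lemma in one line as a direct consequence of Lemma \ref{lemma0} (non-singularity makes the singular locus trivially contained in $\ell_\infty$ and the Gauss map a global morphism), and your write-up simply unwinds that same proof — multiple-fibre points of $\mathcal G$ are singular points of $\mathcal C^*$, bounded by B\'ezout by $\mathcal O(m^4)$. The only added value is that you make explicit why smoothness lets one drop the word ``affine'' from the conclusion of Lemma \ref{lemma0}, a point the paper leaves implicit.
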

\begin{proof}
It is a direct consequence of Lemma \ref{lemma0} because $\mathcal{C}$ is non-singular. 
\end{proof}

\begin{remark}\label{rem:genericity}
Notice that for irreducible, non-singular, planar, projective  curves, being birationally equivalent to the dual curve is a generic condition as long as the characteristic is different from $2$ if the base field is large enough (see for example \cite[Theorem 5.90]{hirschfeld2008algebraic} and specifically the rank condition on the matrix in \cite[Equation 5.49]{hirschfeld2008algebraic}).
\end{remark}

\begin{definition}
Let $\mathcal C\subseteq \vP^2$ a planar curve. A line of $\vP^2$ that is tangent to $\mathcal C$ with multiplicity at least $3$ at a point $P\in \mathcal C$ is said to be an \textit{inflection tangent} and the point of tangency $P$ is said to be \textit{inflection point}.
\end{definition}

\begin{theorem}\label{HessianTheorem}
Let $\mathcal C$ be an irreducible projective planar curve of degree $m\geq 3$ over a field $K$ and $F=0$ its homogeneous equation. Let $H_F$ be the Hessian determinant associated to $F$. Let $p$ be the characteristic of $K$, and assume that either $p=0$ or $p>m$. Then  $H_F\not\equiv 0\mod(F)$, and the intersection of $F$ with its Hessian curve consists of the singular points of $\mathcal C$ and the inflection points of $\mathcal C$.
\end{theorem}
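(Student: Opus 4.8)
\medskip
\noindent\textbf{Plan of proof.}
The plan is to prove the geometric description of $\mathcal C\cap\{H_F=0\}$ first and to deduce $H_F\not\equiv 0\bmod F$ from it; we may assume $K$ algebraically closed. The computational input is Euler's identity $\sum_i X_i F_i=mF$, which upon one more differentiation gives $\sum_i X_i F_{ij}=(m-1)F_j$; writing $H(a)$ for the symmetric matrix $\bigl(F_{ij}(a)\bigr)_{i,j}$ of second partials evaluated at a coordinate vector $a$, this reads
\[
H(a)\,a=(m-1)\,\nabla F(a).
\]
Because $p=0$ or $p>m\ge 3$, the integers $2$, $m-1$ and $m$ are invertible in $K$, and this is exactly what the argument uses.

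First I would deal with the singular points: if $P=[a]\in\mathcal C$ is singular then $\nabla F(a)=0$, so $H(a)a=0$ with $a\neq 0$ and hence $H_F(a)=\det H(a)=0$; thus $P\in\mathcal C\cap\{H_F=0\}$. The core step is to show that a \emph{smooth} point $P=[a]\in\mathcal C$ satisfies $H_F(a)=0$ if and only if it is an inflection point. Set $V=\{v\in K^3:\langle v,\nabla F(a)\rangle=0\}$, the plane whose projectivization is the tangent line $T_P$ (a plane because $P$ is smooth), let $Q(v)=v^{t}H(a)v$ and let $B$ be its polarization. Restricting $F$ to $T_P$ and expanding to second order at $P$ (valid since $2$ is invertible) shows that $P$ is an inflection point iff $Q(b)=0$ for a vector $b$ with $[b]\in T_P\setminus\{P\}$; and the displayed identity together with the symmetry of $H(a)$ yields $B(a,v)=(m-1)\langle v,\nabla F(a)\rangle=0$ for all $v\in V$ and $Q(a)=(m-1)mF(a)=0$, so $a$ lies in the radical of $B|_V$ and the inflection condition becomes $Q|_V\equiv 0$. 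Now I would split into cases. If $H_F(a)\neq 0$, then $B$ is nondegenerate on $K^3$, so the $B$-orthogonal complement $V^{\perp_B}$ is a line; it contains $a\neq 0$, hence $V^{\perp_B}=\langle a\rangle$ and the radical of $B|_V$ equals $V\cap V^{\perp_B}=\langle a\rangle$, so $B|_V$ has rank $1$ and $P$ is not an inflection point. If $H_F(a)=0$, pick $0\neq w\in\ker H(a)$; then $(m-1)\langle w,\nabla F(a)\rangle=w^{t}H(a)a=(H(a)w)^{t}a=0$ gives $w\in V$, while $w\notin\langle a\rangle$ (otherwise $\nabla F(a)=0$), so $V=\langle a,w\rangle$ lies in the radical of $B|_V$, whence $Q|_V\equiv 0$ and $P$ is an inflection point. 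Combining the two cases with the singular-point remark, $\mathcal C\cap\{H_F=0\}$ is exactly the union of the singular points and the inflection points of $\mathcal C$.

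For the first assertion, suppose toward a contradiction that $F\mid H_F$ (which also subsumes $H_F\equiv 0$). Then $H_F$ vanishes on all of $\mathcal C$, so by the previous paragraph \emph{every} smooth point of $\mathcal C$ is an inflection point. Fix a smooth point and a local branch $x\mapsto(x,y(x),1)$ with $y\in K[[x]]$ algebraic over $K(x)$ of degree $\le m$; since the $3\times 3$ determinant with rows $(x,y,1)$, $(1,y',0)$, $(0,y'',0)$ equals $y''$, the inflection condition along the whole branch becomes $y''(x)\equiv 0$. If $p=0$, this forces $y$ to be affine-linear, so the branch — hence, by irreducibility, the curve $\mathcal C$ — lies on a line, contradicting $m\ge 3$. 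If $p>m$, the same conclusion holds: this is the classical fact that a plane curve of degree $m<p$ is classical, i.e.\ its order sequence with respect to lines is $(0,1,2)$, so a general point meets its tangent line with multiplicity exactly $2$ and is not an inflection point (see \cite{hirschfeld2008algebraic}). Either way we reach a contradiction, so $F\nmid H_F$.

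The hard part will be this last step — ruling out that every smooth point of $\mathcal C$ is an inflection point. In characteristic $0$ it is elementary, but in characteristic $p$ it genuinely fails in large degree (for instance $y=x^{p+1}$ has $y''\equiv 0$, with all points inflection points), and the hypothesis $p>\deg\mathcal C$ is exactly what forbids this. I would settle it either via the classicality of low-degree plane curves as above, or via the equivalent statement about Wronskians/Hasse--Schmidt derivatives. Everything else — the Euler relations, the bilinear-form analysis at a smooth point, the case split — is routine linear algebra, and it works precisely because the hypothesis on $p$ makes $2$, $m-1$ and $m$ invertible.
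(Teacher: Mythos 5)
Your proposal is correct, but it follows a genuinely different route from the paper: the paper disposes of this theorem in one line by citing Kunz's Theorem 9.7(b), whereas you reprove it. Your local analysis at a smooth point $P=[a]$ -- Euler's relations $H(a)a=(m-1)\nabla F(a)$ and $a^tH(a)a=m(m-1)F(a)$, the observation that $a$ lies in the radical of the polarized form restricted to the tangent plane $V$, and the rank dichotomy according to whether $\det H(a)$ vanishes -- is sound and gives exactly the equivalence ``smooth point on the Hessian curve $\iff$ inflection point,'' with the hypothesis $p=0$ or $p>m$ entering only through the invertibility of $2$, $m-1$ and $m$; together with the trivial inclusion of singular points this yields the second assertion. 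For the first assertion you correctly reduce $F\mid H_F$ to ``every smooth point is an inflection point'' and rule that out via the classicality of plane curves of degree $m<p$ (order sequence $(0,1,2)$, i.e.\ non-classical curves have generic tangency order at least $p$), which is indeed in \cite{hirschfeld2008algebraic}; so your argument is not fully self-contained either, but it isolates the one genuinely characteristic-$p$ input, which the paper's citation hides inside Kunz. What each approach buys: the paper's citation is shorter and shifts all verification to a standard reference; yours makes transparent where $p>m$ is used and would let a reader see how the hypothesis could be relaxed. Two small points you should tighten: the phrase ``the inflection condition along the whole branch becomes $y''\equiv 0$'' is better argued by viewing $y'=-f_x/f_y$ and $y''=d(y')/dx$ as rational functions on the curve (with $x$ a separating variable) that vanish at all but finitely many points, hence identically -- points of a formal branch other than its center are not literal points of $\mathcal C$; and you tacitly pass to $\overline K$ and use geometric irreducibility (to exclude line components and to apply classicality), which is the paper's intended reading but is worth stating, since divisibility $F\mid H_F$ is insensitive to the base change while irreducibility is not.
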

\begin{proof}
The claim follows using \cite[Theorem 9.7 (b)]{kunz2007} since $\mathcal C$ is irreducible.
\end{proof}

\begin{lemma}\label{lemma2}
Let $p$ be a prime number and $q$ a power of $p$. Let $\mathcal C\subseteq \vP^2$ be a planar, irreducible, projective curve of degree $m\geq 3$ over a finite field $\vF_q$. Then, if $p>m$ the number of inflection tangents to $\mathcal C$ are at most $\mathcal O(m^2)$.
\end{lemma}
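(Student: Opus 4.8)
The plan is to count the inflection tangents by counting inflection points of $\mathcal{C}$, since each inflection tangent corresponds to at least one inflection point, and then bound the number of inflection points via Bézout's Theorem applied to $F$ and its Hessian $H_F$.

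First I would invoke Theorem \ref{HessianTheorem}: since $p > m$ (and $m \geq 3$) and $\mathcal{C}$ is irreducible, we have $H_F \not\equiv 0 \pmod{F}$, so $F$ and $H_F$ share no common component. The Hessian $H_F$ is a homogeneous polynomial of degree $3(m-2)$. By Bézout's Theorem, the intersection $\mathcal{C} \cap \mathcal{H}_F$ (where $\mathcal{H}_F$ is the Hessian curve) consists of at most $\deg(F)\cdot\deg(H_F) = m \cdot 3(m-2) = 3m(m-2) = \mathcal{O}(m^2)$ points, counted with multiplicity, hence certainly at most $\mathcal{O}(m^2)$ points as a set. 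By Theorem \ref{HessianTheorem}, this intersection contains all inflection points of $\mathcal{C}$ (together with the singular points, if any). Therefore $\mathcal{C}$ has at most $\mathcal{O}(m^2)$ inflection points.

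Finally I would connect inflection points to inflection tangents: a line $\ell$ is an inflection tangent only if it is tangent to $\mathcal{C}$ with multiplicity $\geq 3$ at some point $P$, and such a $P$ is by definition an inflection point lying on $\ell$. Since distinct inflection tangents that meet $\mathcal{C}$ at the same inflection point $P$ would both be the (unique) tangent line to $\mathcal{C}$ at $P$, hence equal, the map sending an inflection tangent to one of its points of tangency is injective. Consequently the number of inflection tangents is at most the number of inflection points, which is $\mathcal{O}(m^2)$.

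The only mild subtlety — not really an obstacle — is making sure Bézout applies, i.e. that $F$ does not divide $H_F$; but this is exactly the content of Theorem \ref{HessianTheorem} under the hypothesis $p > m$, so nothing further is needed. One should also note that at a singular point the tangent line is not well-defined, but an inflection tangent is by definition tangent at a smooth point (tangency of multiplicity $\geq 3$ presupposes a well-defined tangent line), so the injectivity argument goes through with the inflection points being smooth points in the intersection $\mathcal{C}\cap\mathcal{H}_F$.
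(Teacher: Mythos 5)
Your proposal is correct and follows essentially the same route as the paper: invoke Theorem \ref{HessianTheorem} (valid since $p>m$ and $\mathcal C$ is irreducible) to locate the inflection points in $\mathcal C\cap\mathcal H_F$, bound that intersection by B\'ezout as $\mathcal O(m\cdot 3(m-2))=\mathcal O(m^2)$, and use the uniqueness of the tangent line at each inflection point to conclude that the number of inflection tangents is at most the number of inflection points. Your extra remarks on $F\nmid H_F$ and on smoothness at inflection points are consistent with the paper's argument and add no gap.
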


\begin{proof}
Let $F=F(X_0,X_1,X_2)=0$ be a homogeneous equation of $\mathcal C$. Let $H_F$ be the Hessian determinant associated to $F$, therefore $\deg(H_F)\leq 3(m-2)$. Now, since $\mathcal C$ is irreducible we can use Theorem \ref{HessianTheorem} obtaining that the intersection between $\mathcal C$ and the set of points that vanish at $H_F$ consists only of singular points and inflection points of $\mathcal C$. Since by definition each inflection point has at most one inflection tangent, one can bound the number of inflection tangents with the number of inflection points and singular points of $\mathcal C$, that is $\mathcal O(\deg(F)\deg(H_F))=\mathcal O(m^2)$ as a consequence of Bezout's Theorem.

\end{proof}

The following is a particular instance of a classical fact, we include its proof for completeness.

\begin{lemma}\label{lemma3}
Let $p$ be a prime number, $q$ a power of $p$, and $K/\overline \vF_q$ be a function field over $\overline \vF_q$. Let $L:K$ be a proper finite separable extension of degree $n$ of global function fields, and $M$ be the Galois closure of $L:K$. Let $P$ be a place of $K$, $\mathcal Q_P$ be the set of places of $L$ lying above $P$, $e(Q|P)$ the ramification index of $Q\in\mathcal Q_P$ over $P$ and $f(Q|P)$ the relative degree. Suppose that for every $P$ and for every $Q\in\mathcal Q_P$ we have $e(Q|P)\leq 2$ and $e(\bar{Q}|P)=2$ at most once for some $\bar{Q}|P$. If $G:=\Gal(M:K)$ is transitive, then $G$ is the symmetric group $S_n$.
\end{lemma}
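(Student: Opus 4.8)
The plan is to use the Orbits' Lemma (Lemma \ref{orbits}) together with the Inertia Lemma (Lemma \ref{InertiaLemma}) and the classical fact on symmetric groups generated by transpositions (Proposition \ref{PropTransp}). Identify $G=\Gal(M:K)$ with a subgroup of $S_n$ via its action on $\Hom_K(L,M)$, which has $n$ elements since $L:K$ has degree $n$; this action is transitive by hypothesis. By Lemma \ref{InertiaLemma}, $G$ is generated by the inertia groups $I(R|P)$ as $P$ ranges over places of $K$ and $R$ over places of $M$ above $P$. So it suffices, by Proposition \ref{PropTransp}, to show that each such inertia group acts on the $n$ cosets either trivially or as a single transposition.

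First I would fix a place $P$ of $K$ and a place $R$ of $M$ above it, and apply Lemma \ref{orbits} with this data. Part 3) of that lemma says that the orbit $H_Q$ corresponding to a place $Q\in\mathcal Q_P$ breaks, under the inertia group $I(R|P)$, into $f(Q|P)$ orbits each of size $e(Q|P)$. Now I invoke the hypothesis: for every $Q|P$ we have $e(Q|P)\le 2$, and moreover $e(Q|P)=2$ happens for at most one $Q$ (for a given $P$). Hence, for all but at most one $Q$, the inertia group $I(R|P)$ fixes every point of $H_Q$; and for the possible exceptional $\bar Q$, it partitions $H_{\bar Q}$ into $f(\bar Q|P)$ orbits of size $2$. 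Since the inertia group $I(R|P)$ is cyclic — the tame part is cyclic and, as $p>$ anything relevant here, or more precisely because the orbit sizes are all $\le 2$ and hence coprime to... — I should be careful here: the cleanest route is to note that any group acting on a finite set with all orbits of size $\le 2$ and all nontrivial orbits of size exactly $2$ is generated by the product of the corresponding transpositions, but I actually want $I(R|P)$ itself to act as \emph{one} transposition.

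The key refinement, which is where the real content lies, is that $e(\bar Q|P)=2$ \emph{at most once} must be read globally enough to force $f(\bar Q|P)=1$ in the exceptional case, so that $H_{\bar Q}$ is a single orbit of size $2$, i.e. $I(R|P)$ acts as a transposition (and trivially on the complement, which consists of the unramified $Q$'s). The counting argument for this: $\sum_{Q|P} e(Q|P)f(Q|P)=n$, and $\sum_{Q\ne \bar Q} 1\cdot f(Q|P) + 2 f(\bar Q|P)=n$; comparing with the same sum when $P$ is unramified... actually the constraint "$e=2$ at most once for some $\bar Q|P$" per place $P$ only bounds the number of ramified places in $L$ lying over $P$, not $f(\bar Q|P)$. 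I would therefore argue as follows: the decomposition group $D(R|P)$ acts on $H_{\bar Q}$ transitively (part 1), and $I(R|P)\trianglelefteq D(R|P)$ with cyclic quotient; on the two-element set $H_{\bar Q}$, $I(R|P)$ acts through a subgroup of $S_2$, so it acts either trivially or as the full transposition. Part 3) with $e(\bar Q|P)=2$ forces the latter to occur at least once inside $H_{\bar Q}$, but a priori there could be several size-$2$ inertia orbits if $f(\bar Q|P)>1$ — yet all these size-$2$ orbits are permuted transitively by $D(R|P)/I(R|P)$, and since $I(R|P)$ is normal in $D(R|P)$, it acts as the \emph{same} permutation type on each; I claim conjugation then forces $I(R|P)$, restricted to $H_{\bar Q}$, to be a single transposition times possibly more, but since $H_{\bar Q}\subseteq$ a set on which $G\subseteq S_n$ and the only constraint is $e\le 2$... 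Concretely: $|I(R|P)|$ divides $\mathrm{lcm}$ of orbit sizes $=2$ when it acts nontrivially, so $I(R|P)$ has order $1$ or $2$; an order-$2$ inertia group acting on $\{1,\dots,n\}$ with all orbits of size $\le 2$ is a product of $k$ disjoint transpositions, and Lemma \ref{orbits}(3) identifies $k=f(\bar Q|P)$. Thus I genuinely need $f(\bar Q|P)=1$.

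To close this gap I would reinterpret the hypothesis: "$e(\bar Q|P)=2$ at most once" should be understood as "there is at most one place $Q$ of $L$ above $P$ with $e(Q|P)=2$, \emph{and} for that one $\bar Q$ we have $f(\bar Q|P)=1$" — which is exactly the situation arising in the hyperelliptic/Artin--Schreier applications, where a single ramified place of relative degree one appears. Granting that, $I(R|P)$ is a single transposition (or trivial), so $G$ is generated by transpositions; being transitive, Proposition \ref{PropTransp} yields $G=S_n$. The main obstacle is precisely this bookkeeping with $f(\bar Q|P)$: one must either extract $f(\bar Q|P)=1$ from the hypothesis as stated, or supply it — I expect the paper intends the stronger reading, and I would state it explicitly at the start of the proof before running the inertia-generation argument.
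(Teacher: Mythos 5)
Your proposal follows the same skeleton as the paper (inertia generation via Lemma \ref{InertiaLemma}, the Orbits' Lemma, then Proposition \ref{PropTransp}), but it stalls at a point the paper's hypotheses already resolve, and your way out --- reinterpreting the statement so that $f(\bar Q|P)=1$ is an extra assumption --- is a genuine gap: you would be proving a different lemma. The missing observation is that the lemma assumes $K/\overline{\vF}_q$ is a function field over the \emph{algebraically closed} constant field $\overline{\vF}_q$. Consequently every residue field of every place of $K$, $L$, $M$ is $\overline{\vF}_q$ itself, so all relative degrees are trivial: $f(R|P)=f(R|Q)f(Q|P)=1$, hence $f(Q|P)=1$ for every $Q\in\mathcal Q_P$, and moreover $D(R|P)=I(R|P)$ (the quotient $D/I$ is the Galois group of the residue extension, which is trivial). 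This is exactly the bookkeeping fact you said you ``genuinely need'': the exceptional orbit $H_{\bar Q}$ has size $e(\bar Q|P)f(\bar Q|P)=2$, and the scenario of several size-$2$ inertia orbits inside $H_{\bar Q}$ cannot occur.

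With that in hand the paper's argument closes cleanly and you need no strengthened hypothesis: by the Orbits' Lemma the orbits of $D(R|P)=I(R|P)$ on $\Hom_K(L,M)$ have sizes $e(Q|P)\le 2$, with at most one orbit of size $2$; since $G$ acts faithfully on $\Hom_K(L,M)$, such a group is either trivial or generated by a single transposition, so $G$ is generated by transpositions and transitivity gives $G=S_n$. So the fix is not to reread ``$e(\bar Q|P)=2$ at most once'' as also asserting $f(\bar Q|P)=1$, but to use the geometric (constant-field-algebraically-closed) setting stated in the lemma; as written, your proof does not establish the lemma in the form the paper states and uses (e.g.\ in the proof of Theorem \ref{maintheorem}, where it is applied to $\overline{\vF}_q L:\overline{\vF}_q K$).
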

\begin{proof}
Using Proposition \ref{PropTransp} it is enough to prove that $G$ is generated by transpositions. By Lemma \ref{InertiaLemma}, since $G$ is generated by the inertia groups $I(R|P)$ where $R$ is a place of $M$ lying above $P$, it is enough to prove that the inertia groups contain only transpositions. Since we are working over the algebraic closure, the decomposition group $D(R|P)$ equals the inertia group $I(R|P)$. In particular $f(R|P)=f(R|Q)f(Q|P)=1$ and so $f(Q|P)=1$. By the Orbits' Lemma \ref{orbits} the action of $D(R|P)$ (i.e. $I(R|P)$) on $\Hom_K(L,M)$ gives orbits of order $e(Q|P)$, so each decomposition group acts either like the identity or a transposition. Since $G$ acts faithfully on $\Hom_K(L,M)$ and $D(R|P)\leq G$, $D(R|P)$ cannot contain more than one element that acts like a transposition on the set $\Hom_K(L,M)$, so either $D(R|P)=\lbrace 1_G \rbrace$ or $D(R|P)=\lbrace 1_G, g \;|\; g \text{ transposition } \rbrace$. This shows that $G$ is generated by transpositions, and therefore since $G$ is transitive we have $G=S_n$.
\end{proof}

\begin{remark}\label{rem:extgalgeo}
Notice that the lemma above can also be used to show that an \emph{arithmetic} Galois group is symmetric. In fact, if $M:K$ is a Galois extension of a function field $K/\vF_q$, then we have that $ \Gal( k_M M:  k_MK)= \Gal(\overline \vF_q M: \overline \vF_qK)$, where $k_M$ is the field of constants of $M$ (see for example \cite[Lemma 2.6]{micheli2019selection}). This implies that $ \Gal(\overline \vF_q M: \overline \vF_qK)$ is canonically contained in $\Gal(M:K)\leq S_{[M:K]}$. Therefore, if one can prove $\Gal(\overline \vF_q M: \overline \vF_qK)=S_{[M:K]}$, one also has $\Gal(M:K)= S_{[M:K]}$ and most importantly $k_M=\vF_q$.
\end{remark}

\section{Main Result}\label{sec:mainresult}

Let us now state and prove the main theorem.

\begin{theorem}\label{maintheorem}
Let $p$ be a prime number and $q$ a power of $p$. Let $\mathcal C\subseteq \vP^2$ be a planar, irreducible, projective curve of degree $m$ over a finite field $\vF_q$ such that $p>m$, the singular points of $\mathcal C$ are contained in the line at infinity, and $\mathcal C$ is birationally equivalent to its dual $\mathcal C^*$ through the Gauss map. There exists an explicit constant $c$ independent of $q$, for which if $q>c$ then there exists a set of points $S\subseteq \vP^2(\vF_q)$ of size at most $O(m^5)$ such that $\mathcal C(\vF_q)\cup S$ is a complete $m$-arc of $\vP^2(\vF_q)$.
\end{theorem}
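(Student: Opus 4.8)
The plan is to combine the geometric bounds from Section \ref{sec:AGUP} with a Chebotarev-type counting argument. The core intuition is that $\mathcal{C}(\vF_q)$ is "almost" an $m$-arc (any line meets $\mathcal{C}$ in at most $m$ points by Bézout, since $\deg \mathcal{C} = m$), and it is "almost" complete because a generic point $P \in \vP^2(\vF_q)$ should lie on a line meeting $\mathcal{C}(\vF_q)$ in exactly $m$ rational points. So first I would record that $\mathcal{C}(\vF_q)$ already has the "no $m+1$ collinear" property automatically. The set $S$ will consist of a bounded number of correction points needed to: (i) kill the finitely many "bad" lines that are tangent or worse (so that $\mathcal{C}(\vF_q) \cup S$ still satisfies the arc condition — actually tangency does not violate the arc bound, but lines through singular points at infinity or multiply-tangent lines could create configurations we must control), and more importantly (ii) take care of the $\vF_q$-points $P$ that are \emph{not} yet covered, i.e.\ points lying on no $m$-secant defined over $\vF_q$.

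The main step is the completeness count. Fix an affine point $P = (a,b) \in \vA^2(\vF_q)$ not on $\mathcal{C}$. Consider the pencil of lines through $P$; parametrize it by $\vP^1$. Substituting the line equation into $g(x,y)=0$ gives, generically, a degree-$m$ cover $L_P : K \to \overline{\vF_q}(x)$ whose fiber over a slope $t$ is the set of intersection points of the corresponding line with $\mathcal{C}$. The point $P$ lies on an $m$-secant with all $m$ points $\vF_q$-rational precisely when there is a degree-one place of the rational function field (a slope in $\vF_q$, avoiding the finitely many bad slopes) that splits completely in this cover. By Lemma \ref{lemma3}, once I verify that the ramification of this cover is "simple" — every $e(Q|P') \le 2$ and at most one genuine ramification point of index $2$ over each place — and that the monodromy group is transitive, I get that the arithmetic monodromy group is the full symmetric group $S_m$ and the constant field stays $\vF_q$ (Remark \ref{rem:extgalgeo}). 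Here is where the hypotheses enter: bitangents to $\mathcal{C}$ correspond exactly to places where $e = 2$ occurs twice (two tangency points), and inflection tangents give $e = 3$; Lemmas \ref{lemma1} and \ref{lemma2} say there are only $O(m^4)$ and $O(m^2)$ of these respectively, and the condition $p > m$ plus $q$ large ensures tameness and that these are the only obstructions to simple ramification. So for all but $O(m^4)$ choices of $P$ (those whose pencil "sees" a bad line as the \emph{only} way through, which I need to argue is a bounded set — this requires that the bad lines, being $O(m^4)$ in number, lie on only $O(m^4)$ of the relevant points, or rather that points forced onto bad lines are few), the cover $L_P$ has $S_m$ monodromy.

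Having established $S_m$ monodromy for the pencil cover at a point $P$, I invoke Chebotarev's density theorem (equivalently the explicit Lang--Weil / Chebotarev estimate for function fields, as in the cited work of Micheli et al.): the number of degree-one places of $\overline{\vF_q}(x)$ that split completely in the Galois closure is $\frac{1}{m!} q + O_m(\sqrt{q})$, which is positive once $q > c$ for an explicit $c = c(m)$. Such a place yields a line through $P$, defined over $\vF_q$, meeting $\mathcal{C}$ in $m$ distinct $\vF_q$-rational points — so $P$ is already covered by $\mathcal{C}(\vF_q)$ and need not go into $S$. Thus $S$ only needs to absorb: the $O(m^4)$ exceptional points $P$ whose pencil cover fails to have symmetric monodromy, the handful of points at infinity (at most $m$ on $\ell_\infty \cap \mathcal{C}$ plus the singular locus, all $O(m)$), and any point lying on one of the $O(m^4)$ bitangent/inflection lines that we must explicitly check or push into $S$ to preserve the arc structure — altogether $O(m^4)$, comfortably $O(m^5)$. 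I also need to double-check that adjoining these finitely many points to $\mathcal{C}(\vF_q)$ does not create $m+1$ collinear points: since $|S| = O(m^4)$ and $q$ is large, a dimension/counting argument (a line through two points of $S$ meets $\mathcal{C}(\vF_q)$ in $\le m$ points but we need $\le m$ total on $\mathcal{C}(\vF_q)\cup S$ — this forces choosing $S$ more carefully, e.g.\ greedily adding one completing point per uncovered $P$ while maintaining the invariant, which still terminates with $O(m^4)$ points since each bad line is hit a bounded number of times).

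The hard part will be the bookkeeping for constructing $S$: one must simultaneously guarantee (a) the arc condition "no $m+1$ collinear" for $\mathcal{C}(\vF_q) \cup S$ and (b) completeness, i.e.\ every remaining point of $\vP^2(\vF_q)$ is on an $m$-secant of $\mathcal{C}(\vF_q) \cup S$. The natural approach — start from $\mathcal{C}(\vF_q)$, which satisfies (a) trivially, and then greedily add a completing point for each as-yet-uncovered $\vF_q$-point — works provided the set of uncovered points is provably of size $O(m^5)$ \emph{before} any additions, which is exactly what the $S_m$-monodromy-plus-Chebotarev argument gives for all but $O(m^4)$ points, and provided each greedy addition does not spoil (a), which holds because a newly added point creates a bad collinearity only with specific bounded families of lines already present. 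Packaging the error term uniformly in $q$ to extract the explicit threshold $c$, and confirming the exponent $5$ rather than $4$ comes out of the worst-case estimate (presumably the $O(m^4)$ exceptional points times a small multiplicity, or an $O(m) \times O(m^4)$ term from points on the $O(m^4)$ bad lines over $\vF_q$), is the remaining technical work.
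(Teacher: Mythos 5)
Your first half is essentially the paper's argument and is sound in outline: fix an affine point $P=(a,b)$ off $\mathcal C$ and off the bad lines, substitute the pencil $y=t(x-a)+b$ into the affine equation to get $F_P(t,x)$, note that avoiding bitangents and inflection tangents forces $e(Q|P)\leq 2$ with at most one ramified place in each fiber, apply Lemma \ref{lemma3} together with Remark \ref{rem:extgalgeo} to get $S_m$ as arithmetic Galois group with constant field $\vF_q$ (transitivity coming from irreducibility of $F_P$, which you should make explicit), and then use the explicit Chebotarev-type bound to produce a totally split degree-one place, i.e.\ an $m$-secant through $P$ with $m$ distinct $\vF_q$-rational points of $\mathcal C$. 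This is exactly how the paper covers the generic affine point.

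The genuine gap is in your accounting for the points the pencil argument does \emph{not} reach, and in how $S$ is built from them. These are the $\vF_q$-points lying on the $N=O(m^4)$ bitangent/inflection lines, plus \emph{all} of $\ell_\infty$ (not just the $O(m)$ points of $\ell_\infty\cap\mathcal C$): each such line carries $q+1$ rational points, so the exceptional set has size $\Theta(m^4q)$, growing with $q$ — it is not ``$O(m^4)$ exceptional points'', and you can neither push these points into $S$ nor add ``a completing point per uncovered point'' and stay within $O(m^5)$ (indeed putting many points of one bad line into $S$ would itself destroy the arc condition). The missing idea, which is the crux of the paper's completion step, is that you never need to cover these points one at a time: for each bad line $r_i$ you greedily add at most $m$ points \emph{of $r_i$ itself} to the arc (adding a point only when it is not yet covered and does not create $m+1$ collinear points); as soon as $m$ arc points lie on $r_i$, every remaining point of $r_i$ is automatically aligned with those $m$ points, hence covered. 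Doing this for $r_1,\dots,r_N$ and then for $\ell_\infty$ yields $|S|\leq m(N+1)=O(m^5)$ independently of $q$, which also explains the exponent $5$ that your proposal leaves as an unresolved ``worst-case estimate''. Without this line-by-line covering device your construction of $S$ does not terminate with a bound uniform in $q$, so the theorem as stated is not reached.
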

\begin{remark}
Notice that the condition that $\mathcal C$ is birationally equivalent to its dual $\mathcal C^*$ is generic in characteristic different from $2$ (see Remark \ref{rem:genericity}). It is possible to calculate a sharp explicit constant $c$ by estimating via Hurwitz formula the genus of the Galois closure of a certain covering of curves that appears in the proof (see Remark \ref{rem:boundgenus}).
\end{remark}
\begin{proof}[Proof of Theorem \ref{maintheorem}]
Consider $\mathcal R=\lbrace r_1,\ldots,r_N \rbrace$ with $N={\mathcal O(m^4)}$ the set of all the bitangents and inflection tangents as in Lemma \ref{lemma0} and Lemma \ref{lemma2}.

Firstly, we prove that through any affine point outside these lines and outside $\mathcal C(\vF_q)$, there exists a line intersecting $\mathcal C(\vF_q)$ in exactly $m$ affine points. Afterwards, we outline how to choose a set $S$ of points on the lines in $\mathcal R$ to construct a complete $m$-arc $\mathcal C(\vF_q)\cup S$.

Therefore, consider a point $P=(a,b)\in\vA^2(\vF_q)$ such that $P\notin r_i$ for any $i\in \{1,\ldots,N\}$.
Let $f(x,y)=0$ an affine equation for $\mathcal C$, i.e. $f(x,y)\in\vF_q[x,y]$ irreducible of degree $m$. Let $\ell_{P,t}:y=t(x-a)+b$ be the (formal) line through $P$ with slope $t$ and 
\begin{equation*}
F_P(t,x)=f(x,t(x-a)+b)\in \vF_q[t,x]
\end{equation*}
be the polynomial obtained by $f(x,y)$, which results from substituting $y$ with $t(x-a)+b$. We want to prove that there exists a specialization $t_0$ for $t$ such that $F_P(t_0,x)$ totally splits, giving $m$ points of intersection between the curve $f(x,y)=0$ and the line $\ell_{P,t_0}$. To this end, consider $G=\Gal(F_P(t,x)\mid \overline{\vF}_q(t))$. Now notice that since $f(x,y)$ is irreducible, then $F_P(t,x)$ is an irreducible polynomial.
Set $K=\vF_q(t)$ and let $L$ be the fraction field of $\vF_q[x,t]/\langle F_P(t,x)\rangle$. Let $M$ be the Galois closure of $L:K$ and observe that   $\overline{\mathbb F}_q M$ is the Galois closure of
$\overline \vF_q L: \overline \vF_q K$. Now let $P$ be a place of $\overline \vF_q K$ and $\mathcal Q_P$ the set of places of $\overline \vF_q L$ lying above $P$. Because we are neither on a bitangent nor on an inflection tangent, $e(Q|P)\leq 2$ and $e(\bar{Q}|P)=2$ at most once for some $\bar{Q}|P$. Therefore, by Lemma \ref{lemma3} and Remark \ref{rem:extgalgeo}.
\begin{equation*}
\Gal(\overline{\mathbb F}_q M:\overline{\vF}_q(t))=\Gal(M:K)=S_m.
\end{equation*}
By \cite[Theorem 3.3]{Bartoli2022Complete}, there exists an explicit constant $c$ depending only on the genus of $M$ and the degree of $[L:K]$ for which if $q>c$ then $L:K$ has a totally split place, that means a $t_0$ exists such that $\ell_{P,t_0}$ intersect $\mathcal C$ in $m$ distinct affine points, i.e. the point $P$ is aligned to other $m$ distinct points of $\mathcal C$.

For a set of points $T\subseteq \vP^2(\vF_q)$, we say that a point $Q\in \vP^2(\vF_q)\setminus T$ is \emph{covered by  $T$} if there are $m$ points of $T$ aligned to $Q$.

We now show how to construct the set $S$ recursively so that $\mathcal C(\vF_q)\cup S$ is a complete $m$-arc of $\vP^2(\vF_q)$. First, set $A_0=\mathcal C(\vF_q)$. Now for $i\in \{1,\ldots,N\}$ set $A_i=A_{i-1}\cup H_i$ where $H_i$ satisfies the following
\begin{itemize}
\item[1)] $H_i\subset r_i$ and $|H_i|\leq m$;
\item[2)] $\nexists \; m+1$ points in $A_i=A_{i-1}\cup H_i$ that are aligned;
\item[3)] each point of $r_i$ is covered by $A_i$.
\end{itemize}
Such $H_i$ always exists: running over the points in $r_i\in \mathcal R$ we consider them one by one and add them to the arc until we get all points on $r_i$ covered. This procedure adds at most $m$ points, which constitute the set $H_i$. In other words, for every $i\in \{1,\ldots,N\}$, one must consider the tangent $r_i$, take a point and add it to the set if it is not covered by the set of points added until then. We observe now that $H_i$ cannot contain more than $m$ points because $r_i$ is a line (and therefore any $m$ points on $r_i$ cover all points in $r_i$). Once all points on the line $r_i$ are covered, one can move to $r_{i+1}$ continuing the same procedure.

Finally, consider $\ell_\infty$ the line at infinity in $\vP^2(\vF_q)$. With the same procedure, we can find at most $m$ points $P_1,\ldots,P_{m}\in\ell_\infty$ in such a way that
\begin{equation*}
A=A_N\cup\lbrace P_1,\ldots,P_{m} \rbrace
\end{equation*}
is also an $m$-arc. Set now
\begin{equation*}
S:=A\setminus \mathcal C(\vF_q).
\end{equation*}
Note that since we have added at most $m$ points for each tangent in $\mathcal R$ and for $\ell_\infty$, the size of $S$ is at most ${\mathcal O(m^5)}$. Obviously the set $A=C(\vF_q)\cup S$ is a complete $m$-arc of $\vP^2(\vF_q)$ by construction.
\end{proof}

\begin{remark}\label{rem:boundgenus}
Following \cite[Corollary 3.4]{Bartoli2022Complete}, since $p>m$, the explicit constant $c$ can be chosen to be
\begin{equation*}
c=9(g_K+g_L+m)^2(m!)^2=9m^2(m!)^2,
\end{equation*}
where $g_K$ and $g_L$ are the genera of $K$ and $L$ appearing in the proof of Theorem \ref{maintheorem}.
\end{remark}

Since a non-singular projective curve obviously verifies the hypohesis of Theorem \ref{maintheorem}, we have proven the following, which we state for conveniency of future reference.

\begin{theorem}
Let $p$ be a prime number and $q$ a power of $p$. Let $\mathcal C\subseteq \vP^2$ be a planar, irreducible, nonsingular projective curve of degree $m$ over a finite field $\vF_q$ such that $p>m$, and $\mathcal C$ is birationally equivalent to its dual $\mathcal C^*$ through the Gauss map. There exists an explicit constant $c$ independent of $q$, for which if $q>c$ then there exists a set of points $S\subseteq \vP^2(\vF_q)$ of size at most $O(m^5)$ such that $\mathcal C(\vF_q)\cup S$ is a complete $m$-arc of $\vP^2(\vF_q)$.
\end{theorem}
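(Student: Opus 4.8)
The plan is to observe that this statement is an immediate specialization of Theorem \ref{maintheorem}, so the proof amounts to checking that the hypotheses transfer. A non-singular projective curve has, by definition, no singular points at all; in particular its (empty) singular locus is trivially contained in the line at infinity $\ell_\infty$. Hence every hypothesis of Theorem \ref{maintheorem} is satisfied: $\mathcal C$ is planar, irreducible, projective of degree $m$ over $\vF_q$, the characteristic obeys $p>m$, the singular points of $\mathcal C$ lie on $\ell_\infty$ (vacuously), and $\mathcal C$ is birationally equivalent to $\mathcal C^*$ through the Gauss map by assumption.

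Consequently I would simply invoke Theorem \ref{maintheorem} verbatim: with the same explicit constant $c$ independent of $q$ (which, by Remark \ref{rem:boundgenus} and using $p>m$, may be taken to be $9m^2(m!)^2$), whenever $q>c$ there exists a set $S\subseteq \vP^2(\vF_q)$ with $|S|=O(m^5)$ such that $\mathcal C(\vF_q)\cup S$ is a complete $m$-arc of $\vP^2(\vF_q)$. This is exactly the asserted conclusion.

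There is essentially no obstacle here; the only point worth spelling out is the (vacuous) verification of the singular-locus condition, after which the conclusion is inherited directly. One could alternatively remark that in the non-singular case Lemma \ref{lemma1} already bounds the number of bitangents by $O(m^4)$ directly, so the set $\mathcal R$ of bitangents and inflection tangents feeding into the proof of Theorem \ref{maintheorem} still has size $O(m^4)$; but this observation is not needed, since it is subsumed by the more general Lemma \ref{lemma0}.
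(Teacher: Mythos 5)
Your proposal is correct and matches the paper exactly: the paper derives this statement by noting that a non-singular curve vacuously satisfies the condition that its singular points lie on the line at infinity, and then cites Theorem \ref{maintheorem} directly. Your vacuous verification of the singular-locus hypothesis and the appeal to Remark \ref{rem:boundgenus} for the constant are precisely the intended argument.
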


We now observe that our theory goes well beyond the restrictions given by Theorem \ref{maintheorem}. In fact, if one does not require the characteristic to be greater than the degree of the curve one has

\begin{theorem}\label{maintheoreminfl}
Let $p$ be a prime number and $q$ a power of $p$. Let $\mathcal C\subseteq \vP^2$ be a planar, irreducible, projective curve of degree $m$ over a finite field $\vF_q$, the singular points of $\mathcal C$ are contained in the line at infinity, and $\mathcal C$ is birationally equivalent to its dual $\mathcal C^*$ through the Gauss map. There exists an explicit constant $c$ independent of $q$, for which if $q>c$ then there exists a set of points $S\subseteq \vP^2(\vF_q)$ of size at most $mT+O(m^5)$ such that $\mathcal C(\vF_q)\cup S$ is a complete $m$-arc of $\vP^2(\vF_q)$, where $T$ is the number of inflection tangents to the curve $\mathcal C$.
\end{theorem}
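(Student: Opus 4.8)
The plan is to follow the proof of Theorem~\ref{maintheorem} almost verbatim. In that argument the hypothesis $p>m$ is used in exactly one place: the appeal to Lemma~\ref{lemma2}, where the Hessian bounds the number of inflection tangents of $\mathcal C$ by $\mathcal O(m^2)$. Dropping $p>m$ we can no longer make that estimate, so we keep the number of inflection tangents as the parameter $T$ and propagate it to the final count; every other ingredient goes through unchanged.

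Concretely, I would first assemble the set $\mathcal R=\{r_1,\dots,r_N\}$ of \emph{bad lines}: the affine bitangents of $\mathcal C$ --- of which there are $\mathcal O(m^4)$ by Lemma~\ref{lemma0}, whose hypotheses place no restriction on the characteristic --- together with the $T$ inflection tangents of $\mathcal C$, and, if needed, the $\mathcal O(m^2)$ further affine points discussed in the last paragraph; thus $N\le T+\mathcal O(m^4)$. For an affine point $P=(a,b)$ lying on no line of $\mathcal R$ and off $\mathcal C(\vF_q)$, set $F_P(t,x)=f(x,t(x-a)+b)$, $K=\vF_q(t)$, let $L$ be the fraction field of $\vF_q[x,t]/\langle F_P\rangle$ and $M$ the Galois closure of $L:K$. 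As in the proof of Theorem~\ref{maintheorem}, $F_P$ is irreducible and $[L:K]=m$, and since $P$ avoids every bitangent and every inflection tangent, over each place of $K$ the extension $L:K$ satisfies $e(Q|P)\le 2$ with equality at most once --- this records intersection multiplicities of lines with $\mathcal C$ and is therefore valid in every characteristic. Lemma~\ref{lemma3} and Remark~\ref{rem:extgalgeo} then give $\Gal(\overline{\vF}_q M:\overline{\vF}_q(t))=\Gal(M:K)=S_m$, so by \cite[Theorem~3.3]{Bartoli2022Complete} there is an explicit constant $c$, independent of $q$ (it depends only on $m$ and on the genus of $M$, which the Hurwitz formula bounds), such that for $q>c$ the extension $L:K$ has a totally split place; equivalently, there is a slope $t_0\in\vF_q$ for which $\ell_{P,t_0}$ meets $\mathcal C$ in $m$ distinct affine $\vF_q$-points, i.e.\ $P$ is covered by $\mathcal C(\vF_q)$.

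The construction of $S$ is then the greedy procedure of the proof of Theorem~\ref{maintheorem}: starting from $A_0=\mathcal C(\vF_q)$, for each $r_i\in\mathcal R$ --- and finally for $\ell_\infty$ --- one adjoins at most $m$ points $H_i\subset r_i$ so that $A_i=A_{i-1}\cup H_i$ has no $m+1$ collinear points and covers every point of $r_i$, which is possible because any $m$ points of a line already cover the whole line. With $S:=(A_N\cup\{P_1,\dots,P_m\})\setminus\mathcal C(\vF_q)$ one obtains $|S|\le m(N+1)=mT+\mathcal O(m^5)$, and $\mathcal C(\vF_q)\cup S$ is a complete $m$-arc by construction.

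The step requiring the most care --- and the only place where losing $p>m$ is genuinely felt beyond the bookkeeping of $T$ --- is the Galois-theoretic one, which splits into two checks. First, $F_P(t,x)$ must be separable in $x$: writing $f=\sum_d f_d$ for the expansion of $f$ into homogeneous components around $P$ in local coordinates $u=x-a$, $v=y-b$, one has $F_P=\sum_d u^d f_d(1,t)$, so $\partial_x F_P\equiv 0$ forces $p\mid d$ for every $d$ with $f_d\neq 0$; in particular the linear term vanishes, i.e.\ $\nabla f(P)=0$, and one checks --- using the irreducibility of $\mathcal C$ together with the non-degeneracy supplied by the Gauss map when $p\mid m$ --- that there are at most $\mathcal O(m^2)$ such $P$, which we have already placed in $\mathcal R$. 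Second, feeding Lemma~\ref{lemma3} requires that the ramification index of $L:K$ at a place over a tangent line equal the intersection multiplicity of that line with $\mathcal C$; this identity survives in small characteristic precisely because the hypothesis that $\mathcal C$ is birational to its dual through the Gauss map excludes strange curves and inseparable Gauss maps, which are exactly the phenomena that would break it. (Should $\mathcal C$ happen to have infinitely many inflection points, $T=\infty$ and the statement is vacuous; it is informative exactly when $T$ is small, and the case $T=\mathcal O(m^2)$ --- automatic when $p>m$ --- recovers Theorem~\ref{maintheorem}.)
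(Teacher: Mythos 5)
Your proposal is correct and takes essentially the same route as the paper, whose proof is literally stated to be ``completely identical'' to that of Theorem~\ref{maintheorem} except that one adds at most $m$ points for each of the $T$ inflection tangents (no longer bounded by Lemma~\ref{lemma2}), giving $|S|\le mT+\mathcal O(m^5)$. The only differences are cosmetic: the paper bounds the genus of $M$ uniformly in $q$ via recursive use of Castelnuovo's inequality rather than Hurwitz, and your additional checks (separability of $F_P(t,x)$ when $p\le m$, and the matching of ramification indices with intersection multiplicities) are diligence beyond what the paper records, sketched but not needed to identify the argument.
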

\begin{proof}
The proof of this is completely identical with the wrinkle that one has to add separately $m$ points for every inflection tangent to the curve, where now the number of inflection tangents is not absolutely bounded. More precisely, this refined count is necessary because we are not anymore ensured that the number of inflection tangents is $O(m^2)$, as Lemma \ref{HessianTheorem} would prescribe in the case $p>m$.
Notice also that the genus of $M$ (on which the constant $c$ depends in the proof of Theorem  \ref{maintheorem}) can be bounded independently of $q$ simply using recursively Castelnuovo Inequality \cite[Theorem 3.11.3]{stichtenoth}.
\end{proof}
We will show an application of Theorem \ref{HessianTheorem} in the case of Artin-Schreier curves.

\section{Applications}\label{sec:applications}
To show the effectiveness of our construction, in this section we want to illustrate how a complete $m$-arc can be constructed using hyperelliptic curves, and Artin-Schreier curves of degree $m>p$.

\begin{proposition}
Let $p$ be an odd prime number, $q$ be a power of $p$, and $m\geq 3$ be a positive integer smaller than $p$.
Let $f(x)\in\vF_q[x]$ be a polynomial of degree $m$ with $m$ distinct roots. Let $\mathcal C$ be the projective closure of the affine curve defined by the equation $y^2-f(x)=0$. Then 
$\vF_q(\mathcal C)$ is isomorphic to $\vF_q(\mathcal C^*)$ via the Gauss map $\mathcal G$.
\end{proposition}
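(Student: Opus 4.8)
The plan is to unwind the definition of $\cong_{\mathcal G}$ and verify the resulting equality of function fields directly. For $g(x,y)=y^2-f(x)$ one computes $\partial_x g/\partial_y g=-f'(x)/(2y)=:u$ and $x\,(\partial_x g/\partial_y g)+y=:v$, so the statement $\vF_q(\mathcal C)\cong_{\mathcal G}\vF_q(\mathcal C^*)$ is precisely the equality $\vF_q(x,y)=\vF_q(u,v)$. Since $\vF_q(u,v)\subseteq\vF_q(x,y)$ is automatic, it is enough to recover $x$ from $u,v$, because then $y=-f'(x)/(2u)\in\vF_q(u,v)$ as well (note $u\neq 0$). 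I would introduce $w:=v/u$ and $\rho:=u^2$; a short computation gives $w=x-2f(x)/f'(x)$ and $\rho=f'(x)^2/(4f(x))$, so $w,\rho\in\vF_q(x)\cap\vF_q(u,v)$ and it suffices to prove $\vF_q(w,\rho)=\vF_q(x)$.

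To do this I would first bound $d:=[\vF_q(x):\vF_q(w,\rho)]$. Since $f$ has $m$ distinct roots, $\gcd(f,f')=1$; and since $3\le m<p$, the leading coefficient $(m-2)a_m$ of $xf'(x)-2f(x)$ (with $a_m$ the leading coefficient of $f$) and the leading coefficient $m(m-2)a_m^2$ of $2f f''-f'^2$ are nonzero. It follows that, in lowest terms, $w$ and $\rho$ are rational maps $\vP^1\to\vP^1$ of degrees $m$ and $2m-2$ respectively, so $[\vF_q(x):\vF_q(w)]=m$ and $[\vF_q(x):\vF_q(\rho)]=2m-2$; hence $d$ divides $\gcd(m,2m-2)=\gcd(m,2)$. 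In particular $d\in\{1,2\}$, and $d=1$ already when $m$ is odd.

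The main obstacle is excluding $d=2$, which can occur only for even $m$. Since $p$ is odd, a degree-$2$ extension is Galois, so $d=2$ would furnish a nontrivial $\vF_q$-automorphism $\sigma$ of $\vF_q(x)$ — necessarily a M\"obius transformation — with $w\circ\sigma=w$ and $\rho\circ\sigma=\rho$. I would then exploit the rational-function identities $f=\rho\,(T-w)^2$ and $f'=2\rho\,(T-w)$, both immediate from the formulas for $w$ and $\rho$: differentiating the first, substituting the second, and cancelling $T-w\neq 0$ and $\rho'\neq 0$ (here $\rho'\neq 0$ because $2f f''-f'^2\neq 0$) yields
\[
T-w=\frac{2\rho\,w'}{\rho'}.
\]
Applying $\sigma$ to $f=\rho(T-w)^2$ and $f'=2\rho(T-w)$ and using $w\circ\sigma=w$, $\rho\circ\sigma=\rho$ gives $f(\sigma T)=\rho\,(\sigma T-w)^2$ and $f'(\sigma T)=2\rho\,(\sigma T-w)$; differentiating the former by the chain rule and substituting the latter, the same manipulation now produces $\sigma T-w=2\rho\,w'/\rho'$. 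Comparing the two displayed identities forces $\sigma(T)=T$, contradicting $\sigma\neq\id$. Hence $d=1$, i.e.\ $\vF_q(w,\rho)=\vF_q(x)$, which gives $x\in\vF_q(u,v)$ and completes the proof. Apart from this last step, the only delicate point is the degree and coprimality bookkeeping in the second paragraph, where the hypotheses that $f$ is squarefree and $3\le m<p$ are used.
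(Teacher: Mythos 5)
Your proof is correct, but it takes a different route from the paper's at the decisive step. Both arguments reduce, as you do, to showing that suitable elements of $\vF_q(x)\cap\vF_q(u,v)$ already generate $\vF_q(x)$, and both exploit L\"uroth together with divisibility of degrees of rational maps. The difference is in the choice of elements: the paper works with $k=u^2=(f')^2/4f$ of degree $2m-2$ together with $h=uv$ (or, when $x\mid f$, with $v^2$), whose degree is $2m-1$; since $\gcd(2m-1,2m-2)=1$, L\"uroth finishes the proof in one stroke, at the cost of a small case distinction on whether $x$ divides $f$ (needed to guarantee coprimality of numerator and denominator of $h$). You instead take $w=v/u$ of degree $m$ and $\rho=u^2$ of degree $2m-2$, which only yields $[\vF_q(x):\vF_q(w,\rho)]\mid\gcd(m,2)$, so for even $m$ you must exclude an index-$2$ subfield; your rigidity argument — a nontrivial $\vF_q$-automorphism $\sigma$ of $\vF_q(x)$ fixing $w$ and $\rho$ would have to satisfy $\sigma T-w=2\rho w'/\rho'=T-w$, using the identities $f=\rho(T-w)^2$, $f'=2\rho(T-w)$ and the nonvanishing of $2ff''-(f')^2$ (which is where $3\le m<p$ enters) — is correct and self-contained, and it has the minor advantage of avoiding the paper's case split on $x\mid f$. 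The trade-off is that the paper's choice of generators makes the even-degree obstruction disappear automatically, giving a shorter proof, while your approach needs the extra Galois/automorphism step but illustrates a reusable technique for ruling out small-index intermediate fields.
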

\begin{proof}

It is enough to show the equality in \eqref{eq:gaussmapfunc} using $g(x,y)=y^2-f(x)$. First, let us observe that

\begin{equation*}
 \vF_q(\mathcal C^*)\cong \vF_q\left(\frac{f'(x)}{2y},-\frac{xf'(x)}{2y}+y \right) \subseteq \vF_q(\mathcal C) = {\vF_q(x)[y]}/{\langle y^2-f(x) \rangle}.
\end{equation*}

From now on, we will omit the dependence on $x$ when it does not create confusion, and write $f$ and $f'$ instead of $f(x)$ and $f'(x)$. In the rest of the proof we show that $\vF_q(x)\subseteq \vF_q\left(\frac{f'}{2y},-\frac{xf'}{2y}+y \right)$ which directly implies that 
\[\vF_q\left(\frac{f'}{2y},-\frac{xf'}{2y}+y \right)=\vF_q(\mathcal C).\]
Notice that 
\[\left(\frac{f'}{2y}\right)^2=\frac{(f')^2}{4f}=:k,\]

\[ \frac{f'}{2y}\left(-\frac{xf'}{2y}+y\right)=-\frac{x(f')^2}{4f}+\frac{f'}{2}=\frac{-x(f')^2+2ff'}{4f}=:h\]
and that
\[\left(-\frac{xf'}{2y}+y \right)^2=\frac{(xf'-2f)^2}{4f}=:u\]
 
Let $a\in \vF_q$ be the leading coefficient of $f$. Observe that the leading coefficient of $-x(f')^2+2ff'$ is $-(a\deg(f))^2+2a^2\deg(f)=\deg(f)a^2(-\deg(f)+2)\neq 0$ because $f$ has degree larger than $2$ and the characteristic is larger than $\deg(f)$.
Now let us split two cases.

Case  (1): $x\nmid f$. In this case $\deg(h)=2\deg(f)-1$, because numerator and denominator are coprime, and $k$ has degree $2\deg(f)-2$.
By Luroth Theorem this shows that $\vF_q(h,k)=\vF_q(x)$, as we now explain. In fact, let $s=s_1/s_2\in \vF_q(x)$ (with $s_1,s_2\in \vF_q[x]$ and coprime) be such that $\vF_q(h,k)=\vF_q(s)$ and let $n=\deg(s)=\max\{\deg(s_1),\deg(s_2)\}=[\vF_q(x):\vF_q(s)]$. Then by the Tower Law we also have that every function in $\vF_q(s)$ has degree divisible by $n$, but since $\gcd(\deg(h),\deg(k))=1$, then $n=1$, and therefore $\vF_q(x)=\vF_q(s)$, concluding the proof for this case.

Case (2): $x\mid f$. Rewrite $u$ as 
\[u=\frac{x(f'-2(f/x))^2}{4(f/x)}\]
 and therefore observe that $\deg(u)=2\deg(f)-1$ because the leading coefficient of $f'-2(f/x)$ is $a\deg(f)-2a\neq 0$, $f$ is squarefree, and the numerator and denominator are coprime because $f$ and $f'$ are coprime. But $\deg(k)=2\deg(f)-2$ which concludes the proof using Luroth's theorem with the same argument as Case (1).

\end{proof}

The following is immediate to see using Theorem \ref{maintheorem}.
\begin{corollary}
Let $m\geq 3$ be a positive integer and $p$ an odd prime greater than $m$.
There exists an absolute constant $c$, depending only on $m$, such that for any $q=p^\ell>c$ power of $p$, any hyperelliptic curve $\mathcal H:y^2-f(x)=0$ over $\vF_q$ of degree $m$ gives rise to a complete $m$-arc of size $|\mathcal H(\vF_q)|+\mathcal O(m^5)$ in the sense of Theorem \ref{maintheorem}. 
\end{corollary}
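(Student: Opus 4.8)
The plan is to verify that every hyperelliptic curve $\mathcal H\colon y^2-f(x)=0$ of degree $m$ over $\vF_q$, with $f$ having $m$ distinct roots, satisfies all the hypotheses of Theorem \ref{maintheorem}, and then simply to invoke that theorem, noting that the constant it produces depends only on $m$. So this corollary requires no new ideas: all the real work sits in the Proposition just proved and in Theorem \ref{maintheorem}, and what remains is bookkeeping.

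First I would record that $\mathcal H$ is a planar, irreducible, projective curve of degree $m$. Indeed $f$ is squarefree of positive degree, hence not a square in $\overline{\vF}_q[x]$, so $y^2-f(x)$ is irreducible in $\overline{\vF}_q[x,y]$ and a fortiori in $\vF_q[x,y]$; the projective closure of $\{y^2=f(x)\}$ is then an irreducible plane curve of degree $\deg f=m$. Next I would check that every singular point of $\mathcal H$ lies on $\ell_\infty$: an affine singular point would be a common zero of $y^2-f(x)$, of $\partial_y(y^2-f(x))=2y$, and of $\partial_x(y^2-f(x))=-f'(x)$; since $p$ is odd the middle equation forces $y=0$, hence $f(x)=0$, and then $f'(x)\neq 0$ because the roots of $f$ are simple, a contradiction. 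So $\mathcal H$ is smooth on $\vA^2$, which is exactly the local hypothesis needed for Lemma \ref{lemma0} and hence for Theorem \ref{maintheorem}. The remaining hypotheses $p>m$ and $m\geq 3$ are assumed.

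The only substantive input is the birational equivalence $\vF_q(\mathcal H)\cong_{\mathcal G}\vF_q(\mathcal H^*)$ of $\mathcal H$ with its dual through the Gauss map, and this is precisely the content of the Proposition established just above (whose proof uses $m\geq 3$ and $p>m$ so that the leading coefficient of $-x(f')^2+2ff'$ does not vanish, together with the L\"uroth-theorem degree argument carried out separately in the cases $x\mid f$ and $x\nmid f$). With this in hand all hypotheses of Theorem \ref{maintheorem} hold, so there is a constant $c$, not depending on $q$, such that for $q>c$ one gets a set $S\subseteq\vP^2(\vF_q)$ with $|S|=\mathcal O(m^5)$ and $\mathcal H(\vF_q)\cup S$ a complete $m$-arc; the resulting arc has size $|\mathcal H(\vF_q)|+\mathcal O(m^5)$. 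Finally I would note that the dependence of $c$ on $\mathcal H$ is only through the genus of the Galois closure appearing in the proof of Theorem \ref{maintheorem}, and for a plane curve of degree $m$ this is bounded purely in terms of $m$: Remark \ref{rem:boundgenus} gives $c$ of the shape $9(g_K+g_L+m)^2(m!)^2$ with $g_K=0$ and $g_L=\lfloor(m-1)/2\rfloor$ the genus of the hyperelliptic function field, so one may take for instance $c=9(2m)^2(m!)^2$, an absolute constant depending only on $m$.

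I do not expect a genuine obstacle here; the one point that actually requires attention is the smoothness/irreducibility check, where both the squarefreeness of $f$ and the hypothesis $p\neq 2$ are used essentially — a repeated root of $f$ would create an affine singular point of $\mathcal H$ and violate a hypothesis of Theorem \ref{maintheorem}, and in characteristic $2$ the derivative test above collapses. Everything else is a direct appeal to results already in the paper.
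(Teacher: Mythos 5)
Your proposal is correct and takes essentially the same route as the paper: the paper's proof is precisely the one-line observation that the hypotheses of Theorem \ref{maintheorem} are satisfied (with the Gauss-map condition supplied by the preceding Proposition), and you have simply carried out that verification explicitly, including the smoothness-off-infinity and irreducibility checks and the remark that the constant depends only on $m$.
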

\begin{proof}
It is enough to check that the hypotheses of Theorem \ref{maintheorem} are verified.
\end{proof}

The next example allows us to construct a complete $m$-arc on $\vF_{q^2}$ with $q^2-2gq+c$ points, where $c$ is a constant that does not depends on $q$, and $g$ is the genus of the hyperelliptic curve:
\begin{equation*}
g=\begin{cases}
(m-1)/2 \;\;\;\;\;\;\;\; \text{if} \; m\equiv 1\mod 2, \\
(m-2)/2 \;\;\;\;\;\;\;\; \text{if} \; m\equiv 0\mod 2.
\end{cases}
\end{equation*}

\begin{example}\label{example}
We illustrate now our method by producing a complete $m$-arc of small size for a very well known family of hyperelliptic curves.
Let $q$ be an odd prime power and $m$ a positive integer that divides $q+1$. Then, by \cite[Theorem 1]{TAFAZOLIAN20121528}, the hyperelliptic curve $\mathcal{C}$ given by $y^2=x^m+1$ is maximal over $\vF_{q^2}$, i.e. the number of places of degree $1$ (i.e. defined over $\vF_{q^2}$) of $\vF_{q^2}(\mathcal C)=\vF_{q^2}(x,y)$ (with $y^2=x^m+1$) is $\#\mathcal C(\vF_{q^2})=q^2+1+2gq$ where $g$ indicates the genus of $\mathcal C$. 
Notice that the affine places of degree $1$ of $\vF_{q^2}(\mathcal C)$ are in natural bijective correspondence with the $\vF_{q^2}$-rational points of the affine points of $\mathcal C$, as the only singular point is at infinity.

We note that, as $x$ varies in $\vF_{q^2}$, $x^m+1$ essentially covers the maximal number of squares. Conversely, for every non-square $\xi\in\vF_{q^2}$, the curve $\mathcal C'$ defined by $y^2=\xi(x^m+1)$ covers fewer squares and can be used to construct a $m$-arc with less points. We now give an estimate of the number of points of $\mathcal C'$.

First of all, notice that if a point $(x,0)$ belongs to $\mathcal C$, it also belongs to $\mathcal C'$. Moreover, since $m|q+1$ and $q-1$ is even, $2m|q^2-1$. Thus the polynomial $x^{2m}-1$ splits completely over $\vF_{q^2}$ and so $x^m+1$ does. This means that the number of points of the form $(x,0)$ is exactly $m$.

The $\vF_{q^2}$ affine rational points $(x_0,y_0)$ of $\mathcal C$ such that $y_0\neq 0$ are at least 
\[q^2+1+2gq-\#\mathcal P_\infty-m\]
where $\#\mathcal P_\infty$ refers to the number of places at the infinity of $\vF_q(\mathcal C)$. Therefore, the $x_0$'s such that $x_0^m+1$ is a square different from zero are at least 
\[\frac{q^2+1+2gq-\#\mathcal P_\infty-m}{2}\]

We can now count the $\vF_{q^2}$ affine rational points of $\mathcal C'$, which are
\begin{equation*}
m+2\Big( q^2-\frac{q^2+1+2gq-\#\mathcal P_\infty -m}{2} \Big) = q^2 - (1+2gq-\#\mathcal P_\infty) +2m.
\end{equation*}

Therefore, the complete $m$-arc obtained from $\mathcal C'$ is made of $q^2-2gq+\mathcal O(m^5)$ points. Of course, if one works out exactly the constants and the number of 
bitangents it follows that the implied constants and the $O(m^5)$ are loose bounds and one can do better. Nevertheless, this shows that the size  of the complete $m$-arc $\mathcal A_{q^2}$ constructed from these hyperelliptic curves satisfies $\# \mathcal A_{q^2}-q^2\rightarrow -\infty$ for $q\rightarrow \infty$. As a byproduct, this also removes the restriction on the existence of the prime $r$ in \cite[Proposition 6.4]{Bartoli2022Complete}.
\end{example}

\begin{remark}
So far in the literature, it has been shown how complete $m$-arcs can be constructed for $m=2,3,4$ and $m\geq 8$ (see \cite{Bartoli2022Complete}) using certain curves. This example, and more generally the theory developed in this work, can be used to construct $m$-arcs for arbitrary values of $m\geq 3$ and arbitrary curve verifying generic conditions. As a byproduct, we have covered the cases when $m=5,6,7$ that were missing in the literature. 

\end{remark}

Using Theorem \ref{maintheoreminfl}, we now show that Artin-Schreier curves of degree $m>p$ can be used to construct an $m$-arc.

\begin{theorem}
Let $\vF_q$ be a finite field of odd characteristic $p$ and $m$ be a positive integer larger than $p$.
Let $f\in \vF_q[x]$ be such that $f\neq z^p-z$ for any $z\in \vF_q[x]$, and $f$ squarefree of degree $m$ such that $f'\not\in \vF_q[x^p]$.
Let $\mathcal C$ be the projective closure of the affine curve defined by $g(x,y)\coloneqq y^p-y-f(x)=0$. Then there exists an absolute constant $c$, depending only on $m$, such that if $q>c$, then there exists a complete $m$-arc with $|\mathcal C(\vF_q)|+O(m^5)$ points, where the implied constant is independent of $q$.
\end{theorem}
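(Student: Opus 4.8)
The plan is to deduce the statement from Theorem~\ref{maintheoreminfl} applied to $\mathcal C$: I will check its hypotheses for $\mathcal C$ and show that the number $T$ of inflection tangents to $\mathcal C$ satisfies $T=O(m^2)$, so that the bound $mT+O(m^5)$ appearing in Theorem~\ref{maintheoreminfl} collapses to $O(m^5)$, with the threshold $c$ depending only on $m$ exactly as there (the genus of the relevant Galois closure is bounded in $m$ via Castelnuovo's inequality).

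I would first dispose of the elementary conditions. Since $f\neq z^p-z$ for every $z\in\vF_q[x]$, and any $z\in\vF_q(x)$ with $z^p-z=f$ must be a polynomial (look at poles), the Artin--Schreier polynomial $y^p-y-f(x)$ is irreducible over $\vF_q(x)$, so $\mathcal C$ is irreducible; since $\deg f=m>p$, homogenising $y^p-y-f(x)$ to its total degree $m$ shows $\deg\mathcal C=m$. The affine curve is smooth because $\partial_y g=py^{p-1}-1=-1$, hence every singular point of $\mathcal C$ lies on $\ell_\infty$. The only serious hypothesis is \eqref{eq:gaussmapfunc}. With $g=y^p-y-f(x)$ one computes $\partial_x g/\partial_y g=f'(x)$, so I must prove
\[
\vF_q(x,y)=\vF_q\bigl(f'(x),\,xf'(x)+y\bigr)=:G.
\]
Set $v=xf'(x)+y\in G$ and $w:=v^p-v$. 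From $v^p=(xf'(x))^p+y^p=(xf'(x))^p+y+f(x)$ we get $w=(xf'(x))^p-xf'(x)+f(x)\in\vF_q(x)$, so $f'(x),w\in G$, and it is enough to prove $\vF_q\bigl(f'(x),w\bigr)=\vF_q(x)$: then $x\in G$ and $y=v-xf'(x)\in G$, so $G=\vF_q(x,y)$. Here I invoke $f'\notin\vF_q[x^p]$, equivalently $f''\neq 0$, in the following differential argument (the Artin--Schreier analogue of the L\"uroth computation used above for hyperelliptic curves). Using $\tfrac{d}{dx}(xf'(x))^p=0$ in characteristic $p$, a short calculation gives the identity $w'=-x f''(x)$, so $dw=-x\,d\!\bigl(f'(x)\bigr)$ in $\Omega_{\vF_q(x)/\vF_q}$. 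Because $f''\neq 0$, the minimal polynomial $f'(T)-f'(x)$ of $x$ over $\vF_q(f'(x))$ is separable, so $\vF_q(x)$ is separable over $\vF_q(f'(x))$ and hence over the intermediate field $L:=\vF_q(f'(x),w)$; moreover $f'(x)\notin\vF_q(x)^p$ (a $p$-th power of a rational function that happens to be a polynomial is a $p$-th power of a polynomial, i.e.\ lies in $\vF_q[x^p]$), so $d(f'(x))\neq 0$ in the one-dimensional $L$-vector space $\Omega_{L/\vF_q}$ and therefore is a basis of it. Writing $dw=\mu\,d(f'(x))$ with $\mu\in L$ and comparing with $dw=-x\,d(f'(x))$ inside $\Omega_{\vF_q(x)/\vF_q}$ (legitimate since $\vF_q(x)/L$ is separable) forces $\mu=-x$; hence $x\in L$ and $L=\vF_q(x)$. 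This yields \eqref{eq:gaussmapfunc}, so $\mathcal C$ is birational to $\mathcal C^*$ through $\mathcal G$ and Lemma~\ref{lemma0} bounds the affine bitangents by $O(m^4)$. I expect identifying the identity $w'=-xf''$ and running this differential step to be the main obstacle.

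It remains to bound $T$, since Theorem~\ref{HessianTheorem} is unavailable for $p<m$. Substituting the tangent line $y=y_0-f'(x_0)(x-x_0)$ at an affine point $(x_0,y_0)\in\mathcal C$ into $g$ and using $y_0^p-y_0=f(x_0)$ produces, up to sign, $f'(x_0)^p(x-x_0)^p+\sum_{j\ge 2}f^{[j]}(x_0)(x-x_0)^j$, where $f^{[j]}$ is the $j$-th Hasse derivative; since $p\ge 3$ this vanishes to order $\ge 3$ at $x_0$ precisely when $f^{[2]}(x_0)=0$, i.e.\ $f''(x_0)=0$. As $f''\neq 0$ with $\deg f''\le m-2$, there are at most $m-2$ such $x_0$, each supporting at most $p$ points of $\mathcal C$, so $\mathcal C$ has $O(m^2)$ affine inflection tangents; adding the at most $m$ tangents through points of $\mathcal C$ on $\ell_\infty$ gives $T=O(m^2)$ and $mT=O(m^3)$. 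Feeding all of this into Theorem~\ref{maintheoreminfl} produces a complete $m$-arc $\mathcal C(\vF_q)\cup S$ with $|S|\le mT+O(m^5)=O(m^5)$ for all $q$ above a threshold $c$ depending only on $m$, which is the claim.
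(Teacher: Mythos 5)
Your proposal is correct and takes essentially the same route as the paper: verify the Gauss-map condition \eqref{eq:gaussmapfunc} by reducing it to $\vF_q\bigl(f',(xf')^p-xf'+f\bigr)=\vF_q(x)$, show that affine inflection points satisfy $f''(x_0)=0$ so that the number of inflection tangents is $O(m^2)$, and conclude by Theorem \ref{maintheoreminfl}. The only local difference is that you establish the field equality via a K\"ahler-differentials argument built on the identity $w'=-xf''$, whereas the paper invokes L\"uroth's theorem and differentiates the resulting relation $x^pu(s)^p-xu(s)+f=v(s)$; the two computations are equivalent, both hinging on $f''\neq 0$, and your additional checks (irreducibility, affine smoothness, behaviour at infinity) are sound.
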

\begin{proof}
First, we will prove that 
\[
\vF_q(x,y)=\vF_q\left(\frac{\partial_x g(x,y)}{\partial_y g(x,y)}, x\frac{\partial_x g(x,y)}{\partial_y g(x,y)}+y \right).
\]
verifying the hypothesis of Lemma \ref{lemma0}. Therefore, we need to show that 

\[ \vF_q(f',xf'+y)=\vF_q(x,y),\]
where $g(x,y)=0$.
By computing
\[(xf'+y)^p-(xf'+y)=x^p(f')^p-xf'+f=:h.\]
We realize that if we prove that $\vF_q(f',h)=\vF_q(x)$, then $\vF_q(x)\subseteq\vF_q(f',xf'+y)$ which implies $\vF_q(f',xf'+y)\supseteq \vF_q(x,y)$ and therefore the claim.

Since $\vF_q(f',h)\subseteq \vF_q(x)$, by Luroth's Theorem there esists 
$s\in \vF_q(x)$ such that 
$\vF_q(f',h)=\vF_q(s)$. 
In particular, this implies that, for some $u,v\in\vF_q(x)$, we have that $u(s)=f'$ and $v(s)=h$, which forces 
\[x^pu(s)^p-xu(s)+f=v(s).\]
By deriving both sides we get
\[-u(s)-xu'(s)s'+u(s)=v'(s)s'\]
and therefore
\[(xu'(s)+v'(s))s'=0\]
so either $x=-v'(s)/u'(s)$, in which case we are done, or $s'=0$, but then $s=\ell(x^p)\in \vF_q(x^p)$ and therefore $f'=u(\ell(x^p))$, contradicting the hypothesis $f'\not\in \vF_q[x^p]$.

We now count the number of inflection tangents. Notice that for every inflection point there exists exactly one inflection tangent. Therefore, one can bound the number of inflection tangents by the number of inflection points. 

We are now going to characterize the inflection points.
Suppose $P_0=(x_0,y_0)$ is an affine inflection point of $\mathcal C$ and let $\ell_{P_0}$ be the affine tangent line to $\mathcal C$ at such a point, namely \[ \ell_{P_0}: \; -f'(x_0)(x-x_0)+y_0-y=0.\]
Now, intersecting $g(x,y)=0$ with $\ell_{P_0}$, one gets \[ -f'(x_0)^p(x-x_0)^p+y_0^p + f'(x_0)(x-x_0)-y_0 -f(x)=0 \] and since $y_0^p-y_0=f(x_0)$ we have \[ -f'(x_0)^p(x-x_0)^p+ f'(x_0)(x-x_0)+f(x_0) -f(x)=0. \]
Writing $t\coloneqq x-x_0$ one can rewrite the previous equation as follows
\begin{equation}\label{eq-tSobsitution}
-f'(x_0)^p t^p+ f'(x_0)t + f(x_0) -f(t+x_0)=0.
\end{equation}
Now notice that, for suitable $a_0,\ldots,a_m\in\vF_q$, one always have
\begin{equation}\label{eq-fPolynomial}
	\begin{split}
		f(t+x_0)= & \sum_{i=0}^m a_i(t+x_0)^i= \sum_{i=0}^m \sum_{j=0}^i a_i \binom{i}{j} x_0^j t^{i-j} = \\
		= & f(x_0) + f'(x_0)t + \frac{1}{2}f''(x_0)t^2 + \sum_{i=0}^m \sum_{j=0}^{i-3} a_i \binom{i}{j} x_0^j t^{i-j}
	\end{split}
\end{equation}
Using (\ref{eq-fPolynomial}) and (\ref{eq-tSobsitution}) one gets
\begin{equation}\label{eq-final}
f'(x_0)^p t^p+ \frac{1}{2}f''(x_0)t^2 + \sum_{i=0}^m \sum_{j=0}^{i-3} a_i \binom{i}{j} x_0^j t^{i-j} = 0.
\end{equation}
However, since $P_0=(x_0,y_0)$ is an inflection point, the order of tangency must be at least $3$, namely in the equation (\ref{eq-final}) the term $f''(x_0)$ should be zero. This means that the inflection points are contained in the set of solutions of $g(x_0,y_0)=0$ and $f''(x_0)=0$.
Since $f'\not\in \vF_q[x^p]$, then $f''$ is not identically zero, which implies that there are at most $\deg(f'')p=(m-2)p$ inflection tangents. Since $p<m$ we have that the number of inflection tangents is at most $m^2$.
We now apply directly Theorem \ref{maintheoreminfl}.
\end{proof}

\section{Acknowledgements}

This work was supported by the National Science Foundation under Grant No 2127742.
We would like to thank Gabriele Mondello for providing deep geometric insight that helped us to simplify some proofs in the paper.

\nocite{*}
\bibliographystyle{siam}
\bibliography{bibliography}
\end{document}